\newtheorem{theorem}{Theorem}[section]
\newtheorem{corollary}[theorem]{Corollary}
\newtheorem{definition}[theorem]{Definition}
\newtheorem{lemma}[theorem]{Lemma}
\newenvironment{proof}[1][Proof]{\noindent\textbf{#1.} }{\ \rule{0.5em}{0.5em}}
\begin{document}
\title{Willmore orbits for isometric Lie actions\thanks{Supported by NSFC (no. 11771331) and NSF of Beijing (no. 1182006)}}
\author{Ming Xu and Jifu Li\thanks{Corresponding author. E-mail: ljfanhan@126.com}}
\date{}
\maketitle

\begin{abstract}
In this work, we study the Willmore submanifolds in a closed connected Riemannian manifold which are orbits for the isometric action of a compact connected Lie group. We call them homogeneous Willmore submanifolds or Willmore orbits. The criteria for these special Willmore submanifolds
is much easier than the general theory which requires a complicated Euler-Lagrange
equation. Our main theorem claims, when the orbit type stratification for
the group action satisfies certain conditions, then we can find a Willmore
orbit in each stratified subset. Some classical examples of special importance, like Willmore torus, Veronese surface, etc., can be interpreted as Willmore orbits and easily verified with our method. Our theorems provide a large number of
new examples for Willmore submanifolds, as well as estimates for their numbers which are sharp in some classical cases.

\textbf{Mathematics Subject Classification (2010)}: 53C30, 53B25.

\textbf{Key words}: Willmore functional; Willmore submanifold; Willmore orbit; orbit type stratification; relative Willmore functional.
\end{abstract}
\section{Introduction}

A Willmore submanifold $M^n$ is defined as an immersed closed submanifold in a closed connected Riemannian manifold $(N^{n+p},g_N)$, which represents a critical point for the Willmore functional $W_{N,g_N}(\cdot)$ \cite{Chen1974}\cite{Wang1998}\cite{Willmore1982}.
It has been intensively studied
in Riemannian geometry, and many examples have been found \cite{HL}\cite{Li2001}\cite{Li2002}\cite{LW2006}\cite{QTY2012}. Studying Willmore functionals and Willmore submanifolds in the unit spheres are particularly important for many reasons. Firstly, they are invariance under conformal or Mobious transformations. Secondly, they are closely related with the structure and classification theories for isoparametric hypersurfaces. And lastly, they are directly related with the famous Willmore conjecture
which has been studied for decades by many geometers
\cite{LY1982}\cite{MR1986}\cite{R1999}\cite{Willmore1965}, and finally
solved in 2014 \cite{solving-Willmore-conjecture}. Notice there is a general version for the Willmore conjecture \cite{Ko1987}\cite{Li2001}\cite{Pi1986}, which
is still widely open.

In this work, we study the Willmore functional and Willmore submanifold with Lie symmetry. We assume the ambient space admits the isometric action of a compact connected Lie group,
and study Willmore submanifolds which are orbits for this group action. We will simply call them homogeneous Willmore submanifolds or Willmore orbits.

The motivation for studying homogeneous Willmore submanifolds is two-folded.
On one hand, since the Lie method has been introduced to the study of geometric functionals, it has been very successful
on finding new examples of homogeneous minimal submanifolds \cite{HL1971}, or homogeneous Einstein metrics \cite{Bo2004}\cite{He1998}\cite{WZ1986}. So we are inspired to try similar ideas or techniques for the Willmore functional, and expect finding new examples of Willmore submanifolds. On the other hand,
we observe some classical examples of Willmore submanifolds with special importance,
like the Willmore torus and its generalizations with higher codimensions\cite{Li2001}\cite{Li2002}, the Veronese surface in $S^4(1)$
\cite{CCK1970}\cite{La1969}\cite{Li2002}, the focal submanifolds $\mathbb{C}\mathrm{P}^3$ and $\widetilde{\mathrm{G}}_2(\mathbb{R}^5)$ in $S^9(1)$ \cite{QTY2012}, and many others, permit large symmetries, i.e. they can be interpreted as Willmore orbits.

There is a simple criteria for Willmore orbits, which does not need to apply the general
Euler-Lagrange equation \cite{GLW2001}\cite{PW1988}.
First we have the orbit type stratification $N=\coprod_{\alpha\in\mathcal{A}} N_\alpha$ for
the $G$-action on $N$ according
to the isotropy group types. Then
a $G$-orbit $G\cdot x$ is a Willmore orbit iff it represents a critical point for $W_{N,g_N}(\cdot)|_{N_\alpha/G}$, where $N_\alpha$ is the stratified subset containing $G\cdot x$, and $N_\alpha/G$ is a finite dimensional smooth manifold contained in the defining space for the Willmore functional, i.e. the moduli space $\mathcal{M}_{M,N}$ of all immersions from $M=G\cdot x$ to $N$ (see Theorem \ref{theorem-1}). Computing critical points in the finite dimensional $N_\alpha/G$ is a much easier task than in the infinite dimensional $\mathcal{M}_{M,N}$.

The examples mentioned above can be easily verified to be Willmore submanifolds using this method. Further more, we expect finding new
examples. The following theorem can meet our goal in quite general sense.

\begin{theorem} \label{main-thm}
Let $G$ be a compact connected Lie group acting isometrically on a closed
connected Riemannian manifold $(N,g_N)$.
Denote $N=\coprod_{\alpha\in\mathcal{A}}N_\alpha$ the orbit type stratification for the $G$-action on $N$,
where $\mathcal{A}$ is a finite set, and $G/H_\alpha$ the orbit type for each $N_\alpha$.
Assume the following two conditions:
\begin{description}
\item{\rm (1)} $\dim H_\alpha<\dim G$ for each $\alpha$;
\item{\rm (2)} $\dim H_\alpha<\dim H_\beta$ when $N_\beta\subset\partial{N_\alpha}$.
\end{description}
Then there exists a Willmore orbit in each $N_\alpha$.
\end{theorem}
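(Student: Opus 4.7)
The plan is to invoke Theorem \ref{theorem-1}, which reduces the problem to finding a critical point of the restriction $W|_{N_\alpha/G}$, viewed as a smooth function on the finite-dimensional manifold $N_\alpha/G$. The strategy is to exhibit a critical point as a minimum, and for this we need to control the behavior of $W$ near the boundary of $N_\alpha/G$ inside $\overline{N_\alpha}/G$. First I would recall that $\overline{N_\alpha}$ is a compact union of strata, and by the standard closure relations in orbit type stratification the ``boundary'' of $N_\alpha/G$ corresponds exactly to the strata $N_\beta$ with $N_\beta \subset \partial N_\alpha$. Condition (1) ensures every orbit in $N_\alpha$ has positive dimension, so the Willmore functional is genuinely defined and nontrivial on $N_\alpha/G$. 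Condition (2) then says that on every boundary stratum $N_\beta$ the isotropy dimension jumps up, hence the orbit dimension drops strictly.

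The main step is to show that $W|_{N_\alpha/G}$ is proper, i.e.\ that $W(G\cdot x_i) \to +\infty$ along any sequence $[G\cdot x_i]\in N_\alpha/G$ whose orbits converge in Hausdorff distance to an orbit $G\cdot x_\infty$ lying in some $N_\beta\subset\partial N_\alpha$. To prove this I would apply the slice theorem at $x_\infty$: a $G$-invariant tubular neighborhood of $G\cdot x_\infty$ is equivariantly diffeomorphic to $G\times_{H_\beta}V$ for a linear slice $V$, and nearby orbits $G\cdot(x_\infty+v)$ fiber over $G\cdot x_\infty$ with fiber $H_\beta\cdot v$. As $v\to 0$ the fiber $H_\beta\cdot v$ collapses to a point while remaining a submanifold of $V$, so the extrinsic second fundamental form of $G\cdot(x_\infty+v)$ in the fiber directions blows up at the rate $|v|^{-1}$. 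A careful bookkeeping of the Willmore integrand—keeping track of the shrinking fiber volume against the blow-up of the second fundamental form, and using the conformal scaling weight $n/2$ in the integrand (where $n=\dim G-\dim H_\alpha$ is the orbit dimension in $N_\alpha$)—should produce the desired divergence. Here condition (2), which forces $\dim H_\beta>\dim H_\alpha$ at every accessible boundary, is essential: it is what guarantees the fiber is not trivial and that a genuine collapse, not just a smooth extension, occurs.

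Once properness is established, the continuous function $W|_{N_\alpha/G}$ attains its infimum at some interior point $[G\cdot x_0]\in N_\alpha/G$. This point is a critical point of $W|_{N_\alpha/G}$, so by Theorem \ref{theorem-1} the orbit $G\cdot x_0$ is a Willmore orbit contained in $N_\alpha$, and the theorem follows. The main obstacle is the blow-up analysis in the second paragraph: rather than a generic ``curvature blows up'' argument, one needs a quantitative estimate on how the principal curvatures of $G\cdot(x_\infty+v)$ transverse to $G\cdot x_\infty$ behave as $v\to 0$, together with a matching estimate on the shrinking fiber volume, so that the scale-invariant combination appearing in the Willmore integrand actually diverges after integration rather than cancelling to a finite limit.
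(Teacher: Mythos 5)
Your top-level strategy is exactly the paper's: reduce to a critical point of $W|_{N_\alpha/G}$ via Theorem \ref{theorem-1}, prove that $W$ diverges along any sequence of orbits collapsing onto a boundary stratum $N_\beta\subset\partial N_\alpha$ (this is Theorem \ref{collapsing-thm-1} in the paper), and take the minimum. However, the step you yourself flag as the ``main obstacle'' is the entire content of the proof, and your sketch of it would not close. The bookkeeping you propose --- second fundamental form blowing up like $|v|^{-1}$ against shrinking fiber volume, weighted by the exponent $n/2$ --- cancels \emph{exactly}, because the integrand $(S-n\|H\|^2)^{n/2}\,d\mathrm{vol}$ is scale-invariant; a round sphere shrinking to a point contributes nothing, for instance. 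The actual mechanism in the paper is anisotropy of the collapse: near $x_\infty$ the orbit $G\cdot x_n$ looks like $D^{n_1}\times(H_\beta\cdot y_n)$, where the first factor (a piece of the limit orbit $G\cdot x_\infty$) stays at unit scale while the second lives on a normal sphere of radius $r_n\to 0$. After rescaling by $r_n^{-1}$ the principal curvatures are close to $0$ on the $D^{n_1}$ directions and close to $1$ on the fiber directions, so the \emph{traceless} part of the second fundamental form is bounded below and each rescaled local piece contributes a fixed amount $C>0$ to the scale-invariant integral; divergence then comes from the number $l(n)\to\infty$ of disjoint such pieces needed to cover $G\cdot x_n$. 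Note this is also the true role of condition (1): it guarantees $\dim G\cdot x_\infty=n_1>0$, i.e.\ that the flat directions exist so the integrand does not vanish (your reading of (1) as merely making $W$ ``defined and nontrivial'' on $N_\alpha$ misses this).

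There is a second gap your outline does not anticipate: the lower bound on the volume of the rescaled fiber $H_\beta\cdot y_n\subset S^{n_2-1}(1)$ fails when these slice orbits themselves collapse inside the normal sphere, i.e.\ when the approach of $x_n$ to $N_\beta$ passes near intermediate strata. The paper handles this by an induction on the relative complexity degree between $N_\alpha$ and $N_\beta$, and the induction forces a strengthening of the statement: one must prove the divergence for a \emph{relative} Willmore functional on products $M\times D_1\subset N\times D$ (Theorem \ref{collapsing-thm-2}), because the unit-scale factor $D^{n_1}$ reappears as an extra product factor at each inductive step. Without this generalization the induction does not go through, so your argument as written only covers the case of relative complexity degree zero.
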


The proof of Theorem \ref{main-thm} relies totally on our criteria Theorem \ref{theorem-1} and the following theorem.

\begin{theorem}\label{collapsing-thm-1}
Let $(N,g_N)$ and $G$ be the same, and the orbit type
stratification $N=\coprod_{\alpha\in\mathcal{A}}N_\alpha$ satisfies the same
assumptions (1) and (2) as in Theorem \ref{main-thm}.
Then for any sequence $x_n\in N_\alpha$, $n\in\mathbb{N}$, such that
$\lim_{n\rightarrow\infty}x_n=x'\in N_\beta\subset\partial{N_\alpha}$, we have $$\lim_{n\rightarrow\infty}W_{N,g_N}([x_n])=\infty,$$ in which $[x_n]\in N_\alpha/G\subset
\mathcal{M}_{M,N}$, and $M$ is any $G$-orbit in $N_\alpha$.
\end{theorem}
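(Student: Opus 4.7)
The plan is to show that $W_{N,g_N}([x_n])\to\infty$ by analyzing the geometry of the collapsing orbit $M_n := G\cdot x_n$ as $x_n\to x'$. Condition (2) gives $\dim H_\alpha<\dim H_\beta$, so $\dim M_n>\dim(G\cdot x')$, and $M_n$ carries ``extra'' tangent directions that shrink into $G\cdot x'$ at a controlled rate $r_n\to 0$. In these directions the principal curvatures of $M_n$ blow up like $1/r_n$. Because the Willmore integrand is positively homogeneous of degree $n=\dim M_n$ in the second fundamental form, it is of order $r_n^{-n}$, while the volume of $M_n$ in the induced metric collapses only like $r_n^k$, where $k=\dim H_\beta-\dim H_\alpha$. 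Condition (1) forces $k<n$, so the total Willmore integral blows up like $r_n^{k-n}$.

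To set this up I would first apply the slice theorem at $x'$: on a $G$-invariant tubular neighborhood $U$ of $G\cdot x'$ there is a $G$-equivariant diffeomorphism $U\cong G\times_{H_\beta}V$, where $V$ is the orthogonal complement of $T_{x'}(G\cdot x')$ inside $T_{x'}N$, equipped with a linear $H_\beta$-action. Represent each $x_n$ by a vector $v_n\in V$ with $r_n:=|v_n|\to 0$. Since $x_n\in N_\alpha$, the isotropy of $v_n$ in $H_\beta$ is conjugate to $H_\alpha$, and $H_\beta\cdot v_n$ is a $k$-dimensional orbit in $V$ obtained by rescaling the fixed unit orbit $H_\beta\cdot(v_n/r_n)$ by $r_n$. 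Via the exponential map from $G\cdot x'$, the submanifold $M_n$ is, up to curvature corrections of $N$, the $G$-equivariant sphere bundle over $G/H_\beta$ whose fiber is this rescaled $H_\beta$-orbit in $V$.

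Next I would estimate the second fundamental form $B_n$ of $M_n\hookrightarrow N$. Split the tangent space at a point of $M_n$ along a fiber into base directions (lifts of $T(G\cdot x')$) and fiber directions. The fiber is a rescaled orbit of a linear action and so contributes principal curvatures of order $1/r_n$, while base and mixed terms are $O(1)$ after absorbing the bounded ambient curvature of $N$. This yields a pointwise lower bound $|\mathring B_n|\ge c/r_n$ with $c>0$ independent of $n$. The induced volume of $M_n$ factors to leading order as $\mathrm{vol}(G/H_\beta)\cdot\mathrm{vol}(H_\beta\cdot v_n)\sim r_n^k$. Integrating the Willmore integrand, which up to bounded correction terms is a positive multiple of $|\mathring B_n|^n$, then gives
\[
W_{N,g_N}([x_n])\ \ge\ c'\,r_n^{-n}\cdot r_n^k\ =\ c'\,r_n^{k-n}.
\]
Since condition (1) gives $\dim H_\beta<\dim G$, we have $k<\dim G-\dim H_\alpha=n$, and hence $r_n^{k-n}\to\infty$.

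The hard part will be the uniform pointwise estimate $|\mathring B_n|\ge c/r_n$. The exponential map from $G\cdot x'$ distorts the linear slice picture by ambient curvature terms, and the decomposition of $TM_n$ into fiber and base directions is not orthogonal for the induced metric on $M_n$, so in principle the large fiber curvatures might be partially cancelled by mixing with bounded base directions or by the trace-subtraction in $\mathring B$. Checking non-cancellation requires computing $B_n$ in a $G$-equivariant frame adapted to the slice splitting and tracking only the leading $1/r_n$ terms carefully; $G$-equivariance reduces the computation to a single point on each $M_n$, and compactness of $N$ keeps the ambient correction terms uniformly bounded.
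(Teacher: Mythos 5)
Your blow-up picture is essentially the one the paper uses when the collapsing is ``simple'': rescale the slice by $r_n^{-1}$, observe that $M_n$ becomes (a bundle with fiber) a fixed-size $H_\beta$-orbit in the unit sphere of the slice with principal curvatures near $1$ there and near $0$ along the $G/H_\beta$ directions, so the traceless second fundamental form is bounded below and the count of disjoint $r_n$-balls along $G\cdot x'$ gives the divergence rate $r_n^{-\dim G/H_\beta}=r_n^{k-n}$, exactly your exponent. However, there is a genuine gap: both of your key estimates --- $\mathrm{vol}(H_\beta\cdot v_n)\sim r_n^{k}$ and ``principal curvatures of order $1/r_n$'' --- silently assume that the normalized orbits $H_\beta\cdot(v_n/r_n)$ stay in a fixed compact region of a single orbit-type stratum of the unit sphere $S^{n_2-1}(1)\subset V$. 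Nothing forces this: the sequence $v_n/r_n$ may converge to a point $y$ whose $H_\beta$-orbit has strictly smaller dimension, in which case $\mathrm{vol}(H_\beta\cdot v_n)=o(r_n^{k})$ and your lower bound $c'\,r_n^{k-n}\cdot o(1)$ need not diverge. A concrete instance is $G=\mathrm{SO}(2)^3$ on $S^5$ with $x_n=(t_{1,n},t_{2,n},t_{3,n})$, $t_{3,n}=t_{2,n}^3\to0$: here $r_n\sim t_{2,n}$, the normalized fiber torus collapses to a circle, and your estimate yields $W\gtrsim t_{3,n}/t_{2,n}^2=t_{2,n}\to0$, even though the exact formula (8.1) of the paper shows $W\sim t_{2,n}^{-2}\to\infty$. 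The divergence in such cases comes from the \emph{extra} curvature blow-up in the secondarily collapsing directions, which your uniform bound $|\mathring B_n|\ge c/r_n$ throws away.

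The paper resolves exactly this by an induction on the ``relative complexity degree'' between $N_\alpha$ and $N_\beta$: when the rescaled orbits $H_{\beta,o}\cdot y_n$ collapse inside $S^{n_2-1}(1)$, one applies the collapsing statement again to the $H_{\beta,o}$-action on that sphere. Because the blown-up model is a product (a disk factor $D^{n_1}$ from the $G/H_\beta$ directions survives), the inductive statement must be formulated for a \emph{relative} Willmore functional on product spaces with a converging family of metrics (Theorem \ref{collapsing-thm-2}), together with the elementary inequality $(P_1+P_2)^{n_4/2}\ge C\,P_1^{n_1/2}P_2^{(n_4-n_1)/2}$ to split off the fiber contribution. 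So the missing ingredient in your proposal is not the pointwise non-cancellation you flag as the hard part (that follows easily from $C^\infty$-convergence of the rescaled metrics to the flat product model); it is the case of iterated collapsing, which requires strengthening the statement before it can be proved by induction.
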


Because Theorem \ref{collapsing-thm-1} described the behavior of
$W_{N,g_N}(\cdot)$
when the sequence of orbits $G\cdot x_n$'s collapse (converge) to one with a lower dimension,
we will simply call it the orbit collapsing theorem for the Willmore functional.

Our methods are the standard localization technique in Riemannian geometry, i.e. enlarging a sequence of infinitesimal neighborhoods to a fixed standard model space (see Section 6), together with the induction for the relative
complexity degree between $N_\alpha$ and $N_\beta$ (see Section 3 for its definition).

There are some tricky issues when we do the induction, because some extra product factor appears. Those extra product factors can not be ignored when the collapsing behavior of the orbits is complicated and we need to use the localization technique more than once.
This difficulty requires us to prove a more
general theorem than Theorem \ref{collapsing-thm-1}, which includes the extra product factor. To be precise, we will define a relative Willmore functional (See Section 5), and prove a similar orbit collapsing theorem for our new functional (see
Theorem \ref{collapsing-thm-2}). Then
Theorem \ref{collapsing-thm-1} and Theorem \ref{main-thm} are just immediate corollaries.

Imposing the assumptions (1) and (2) in
Theorem \ref{main-thm} will not loss too much generalities. We can expect
harvesting
abundant new examples of Willmore submanifolds from Theorem \ref{main-thm}. As examples, we will only discuss some important classical cases, i.e. the canonical action of $\mathrm{SO}(n_1)\times\mathrm{SO}(n_2)$ on $\mathbb{R}^{n_1+n_2}$ (and its generalization with more $\mathrm{SO}$-factors), the $SO(3)$-conjugation on $S^4(1)$, and the
$\mathrm{Ad}$-action of $SO(5)$ on $S^9(1)\subset\mathfrak{g}=\mathfrak{so}(5)$. Notice Theorem \ref{main-thm}
provides a lower bound estimate for the number of Willmore orbits, which is
irrelevant
to the equivariant changes for the metric $g_N$. For the examples we discuss, i.e. the ambient space is a standard sphere,
this estimate is sharp by either direct calculation or Theorem 5.1 in \cite{Li2001}.

Some of above examples are discussed in \cite{Li2001} as isoparametric hypersurfaces. They were further studied in \cite{QTY2012}\cite{Xie} to give more examples of
non-homogeneous Willmore submanifolds, i.e. the focal submanifolds for an isoparametric foliation in a standard sphere. The techniques in
this work can even be generalized to those situations to provide new observations for those results.

This paper is organized as following.
In Section 2, we recall some fundamental knowledge about Willmore functional and Willmore submanifold. In Section 3, we introduce the orbit type stratification. In Section 4 we define homogeneous Willmore submanifold and prove its criteria, i.e. Theorem \ref{theorem-1}. In Section 5, we define the relative Willmore functional and state the orbit collapsing theorem for it, i.e. Theorem \ref{collapsing-thm-2}. In Section 6 and Section 7, we prove Theorem \ref{collapsing-thm-2}. In Section 8, we use our new method to reconsider some classical examples.

{\bf Acknowledgement.} The authors would like to sincerely thank the referee for carefully reading this paper and providing many helpful suggestions. The authors would also like to thank Xiaobo Liu, Shaoqiang Deng, and Chao Qian for helpful discussions.

\section{Willmore functional and Willmore submanifold}

A closed connected $n$-dimensional immersed submanifold $M$ in the $(n+p)$-dimensional closed Riemannian manifold $(N,g_N)$ is called a {\it Willmore submanifold}, if it represents a critical point for the {\it Willmore functional}
\begin{equation}\label{0000}
W_{N,g_N}(M)=\int_M (S-n ||H||^2)^{n/2}d\mathrm{vol}_{M,g_N},
\end{equation}
where $S$ is the norm square of the second fundamental form of $M$ in $N$, $H$
is the mean curvature vector, and $d\mathrm{vol}_{M,g_N}$ is the volume form of $M$ defined
by the submanifold metric $g_M=i^*g_N$, where $i:M\rightarrow N$ is the immersion \cite{Li2001}.

The defining space of $W_{N,g_N}(\cdot)$ can be locally described as following.
For each pair of closed connected
manifolds $(M^n,N^{n+p})$, we define the moduli space of immersions from $M$ to $N$ as the quotient space of
all immersions $i:M\rightarrow N$, divided by all self
diffeomorphisms of $M$, and denote it as $\mathcal{M}_{M,N}$. A point in $\mathcal{M}_{M,N}$ can be either denoted as $[i]$ or $[i(M)]$,  where
$i$ is the immersion. Let $g_N$ be a given Riemannian metric on $N$. Then for each immersion $i$, the metric on $M$ is naturally induced. So (\ref{0000}) gives the definition of the Willmore functional $W_{N,g_N}(\cdot)$ for each immersion. Obviously, it is preserved by all actions of self diffeomorphisms of $M$. So $W_{N,g_N}(\cdot)$ is well defined on $\mathcal{M}_{M,N}$.

The moduli space
$\mathcal{M}_{M,N}$ is
an infinite dimensional manifold if it is not an empty or discrete set. With the metric $g_N$ chosen,
the tangent space $T_{[i]}\mathcal{M}_{M,N}$
can be identified with the space $C^\infty(i^*\mathcal{V}^{i(M),N,g_N})$ of all smooth sections of the bundle $i^*\mathcal{V}^{i(M),N,g_N}$ over $M$,
where $\mathcal{V}^{i(M),N,g_N}$ is the $g_N$-orthogonal complement sub-bundle for $i_*TM$ in $TN$.

The tangent map (i.e. the first degree differential) $DW_{N,g_N}$ of the Willmore functional is defined everywhere. An immersion $i:M\rightarrow N$ defines a Willmore submanifold iff the linear functional $DW_{N,g_N}:T_{[i]}\mathcal{M}_{M,N}\rightarrow \mathbb{R}$ vanishes identically.
This condition can be equivalently presented by the geometric data, which is
called the Euler-Lagrange equation for the Willmore functional. See
\cite{GLW2001}\cite{Li2002}\cite{PW1988} for its explicit formula when $N$ is a standard sphere. For general $N$, the Euler-Lagrange equation is very complicated.

When $n=0$ or $1$, $W_{N,g_N}(\cdot)$ is a constant functional, and when $p=0$, $\mathcal{M}_{M,N}$ is a discrete set. In this work, we count all immersed submanifolds in these special cases as Willmore submanifolds.

We have mentioned some classical examples of Willmore submanifolds with special importance in Section 1. We will revisit them in the
last section, with more details given.

\section{The orbit type stratification}

Consider a closed connected Riemannian manifold $N$ with the isometric action of a compact connected Lie group $G$. By the slice theorem in differential geometry \cite{MY1957}, we have the {\it orbit type stratification}
$N=\coprod_{\alpha\in\mathcal{A}}N_\alpha$, where $\mathcal{A}$ is a finite
set. Here we summarize some fundamental properties for this stratification.

For each $\alpha\in\mathcal{A}$, $N_\alpha$ and $N_\alpha/G$ are connected smooth manifolds. They are not closed or compact in general. All $G$-orbits in a single $N_\alpha$ has the same isotropy group up to conjugations. If $G\cdot x=G/H_\alpha$ for some $x\in N_\alpha$, we will simply call $H_\alpha$ the {\it isotropy group type} of $N_\alpha$, and call $G/H_\alpha$ the {\it orbit type} of $N_\alpha$. The subset $N_\alpha$ is a smooth fiber bundle over $N_\alpha/G$ which fibers are $G$-orbits.

The closure $\overline{N_\alpha}$ of $N_\alpha$ in $N$ is the union
of $N_\alpha$ with several other $N_\beta$ with lower dimensions. We denote
$\partial N_\alpha=\overline{N_\alpha}\backslash N_\alpha$.
For $N_\beta\subset\partial {N_\alpha}$, then we can find $x'\in N_\beta$ and $x\in N_\alpha$, such that the isotropy group $H_\beta$ at $x'$ contains the isotropy group $H_\alpha$ at $x$. Further more $H_\beta$ either has a bigger dimension, or has the same dimension but more components than $H_\alpha$.

Now we define the {\it relative complexity degree} between two stratified subsets $N_\alpha$ and $N_\beta\subset\partial N_\alpha$. It is the maximal integer $d$ such that we can insert $d$
subsets $N_{\alpha_i}$, $1\leq i\leq d$, between $N_{\alpha_0}=N_\alpha$
and $N_{\alpha_{d+1}}=N_\beta$, such that $N_{\alpha_{i+1}}\subset\partial
N_{\alpha_i}$ for $0\leq i\leq d$.

In the rest of this section, we will construct a standard local chart at
$x'\in N_\beta$,
with respect to the group action and stratification.

For any $x'\in N_\beta$, the isotropy group $H_\beta$ acts on $T_{x'}N$, which
preserves the $g_N$-orthogonal decomposition
$$T_{x'}N=T_{x'}(G\cdot x')\oplus
\mathcal{V}_{x'}^{N_\beta,N,g_N}\oplus\mathcal{V}_{x'}^{G\cdot x', N_\beta,g_N},$$
where the second and third factors in the right side are
the $g_N$-orthogonal complement subspaces at $x'$ for $N_\beta$ in $N$ and $G\cdot x'$ in $N_\beta$
respectively.
The $H_\beta$-action on the second factor will be important for our later discussions. We will simply call it the {\it isotropy action}.

We
denote $n_1=\dim G\cdot x'$,
$n_2=\dim N-\dim N_\beta$ and $n_3=\dim N_\beta-\dim G\cdot x'$.
The standard local chart is defined by the following diffeomorphism $\phi$ from $D^{n_1}(c_1)\times D^{n_2}(c_2) \times D^{n_3}(c_3)$ with some positive numbers $c_1$, $c_2$ and
$c_3$, to a neighborhood $\mathcal{U}$ of $x'$ in $N$, where
$D^{n}(c)=\{v\in\mathbb{R}^n|\,\,||v||<c \mathbb\}$ denotes the open round disk centered at the origin $o$.

Firstly, we fix a linear direct sum decomposition $\mathfrak{g}=\mathfrak{h}_\beta+\mathfrak{m}$, where $\mathfrak{g}=\mathrm{Lie}(G)$ and $\mathfrak{h}_\beta=\mathrm{Lie}(H_\beta)$.
Let $D^{n_1}(c_1)$ be an open disk around 0 in $\mathfrak{m}$ such that
$\phi_1(v)=g(v)\cdot x'$ with $v\in D^{n_1}(c_1)$ and $g(v)=\mathrm{exp}(v)\in G$, is a diffeomorphism from $D^{n_1}(c_1)$
to a neighborhood of $x'$ in $G\cdot x'$. Here $\mathrm{exp}$ is
the exponential map in Lie theory.

Secondly, because the fixed point set $\mathrm{Fix}(H_\beta)\cap N_\beta$ is a
smooth submanifold, we can
choose a smooth normal slice $\mathcal{N}$ for $\mathrm{Fix}(H_\beta)\cap(G\cdot x')$ in $\mathrm{Fix}(H_\beta)\cap N_\beta$ such that $\mathcal{N}$ and $\mathrm{Fix}(H_\beta)\cap(G\cdot x')$ intersect $g_N$-orthogonally
 at $x'$. Schur Lemma for the $H_\beta$-actions implies $\mathcal{N}$ intersects
 $g_N$-orthogonally with $G\cdot x'$ at $x'$, i.e. it is a $g_N$-orthogonal normal slice of $G\cdot x'$ in $N_\beta$.
Let $D^{n_3}(c_3)$ be a local chart on this slice with $x'$ identified with the origin $o$. Notice the isotropy group of each $v\in D^{n_3}(c_3)\subset \mathcal{N}$ is $H_\beta$.

Thirdly, we trivialize the bundle $\mathcal{V}^{N_\beta,N,g_N}$ around $x'$
in $D^{n_3}(c_3)\in\mathcal{N}$ by $n_2$ smooth sections which provide a $g_N$-orthonormal basis
at each point. Then any vector $v_2\in D^{n_2}(c_2)\subset \mathcal{V}_{x'}^{N_\beta,N,g_N}$ can also be regarded as a tangent vector with the same $g_N$-length at any point in $D^{n_3}(c_3)\in\mathcal{N}$.

Finally we can construct the diffeomorphism as the following,
\begin{equation}\label{local-model-without-parameter-space}
\phi(v_1,v_2,v_3)=g(v_1)\cdot\exp_{v_3,g_N}v_2,
\end{equation}
where $v_i\in D^{n_i}(c_i)$ with sufficiently small positive $c_i$, $i=1$, $2$, $3$.
Here $\exp_{v_3,g_N}$ is the exponential map at $v_3\in D^{n_3}(c_3)\subset N$ for the metric $g_N$.

We endow $D^{n_1}(c_1)\times D^{n_2}(c_2)\times D^{n_3}(n_3)$ with an Euclidean metric induced from $g_N$ at $x'$. Obviously the three factors are then orthogonal to each other. We define the
 $H_\beta$-action
on $D^{n_1}(c_1)$ and  $D^{n_3}(c_3)$ to be trivial,
and on $D^{n_2}(c_2)$ to be induced by the isotropy action.
Then we have the following lemma.
\begin{lemma}\label{lemma-5}
 When $x\in\mathcal{U}$ is sufficiently close to $x'$, or $c_1$, $c_2$ and $c_3$ in above construction are sufficiently close to 0, we can find some $y\in D^{n_2}(c_2)$ and $z\in D^{n_3}(c_3)$, such that $D^{n_1}(c_1)\times (H_\beta\cdot y)\times z$ is
an open neighborhood of $x$ in $G\cdot x$. Further more,  $H_\beta\cdot y$
is contained in $S^{n_2-1}(c)$ in the second factor $D^{n_2}(c_2)$ of $D^{n_1}(c_1)\times D^{n_2}(c_2)\times D^{n_3}(c_3)$, where $c$ is the $g_N$-distance from $x$ to $N_\beta$.
\end{lemma}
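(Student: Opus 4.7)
The plan is to read off the orbit $G\cdot x$ directly in the local chart $\phi$ given by (\ref{local-model-without-parameter-space}). For $x=\phi(v_1,v_2,v_3)\in\mathcal{U}$, I would set $z=v_3$ and $y=v_2$, and then argue that $D^{n_1}(c_1)\times(H_\beta\cdot y)\times\{z\}$ sits inside $G\cdot x$ as an open neighborhood of $x$. The inclusion rests on two facts: the slice $\mathcal{N}$ lies in $\mathrm{Fix}(H_\beta)$ so $h\cdot z=z$ for every $h\in H_\beta$, and because $G$ acts by isometries, the exponential map $\exp_{z,g_N}$ intertwines the isotropy $H_\beta$-action on $\mathcal{V}_z^{N_\beta,N,g_N}$ with the $H_\beta$-action on $N$ fixing $z$. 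Choosing the trivialization of $\mathcal{V}^{N_\beta,N,g_N}$ over $D^{n_3}(c_3)$ in an $H_\beta$-equivariant way (possible since $H_\beta$ acts trivially on the base $\mathcal{N}$), one gets
\begin{equation*}
\phi(v'_1,h\cdot y,z)=g(v'_1)\cdot\exp_{z,g_N}(h\cdot y)=g(v'_1)h\cdot\exp_{z,g_N}(y)\in G\cdot x
\end{equation*}
for every $v'_1\in D^{n_1}(c_1)$ and $h\in H_\beta$.

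Openness at $x$ would then follow from a dimension count together with transversality. The isotropy of $\exp_{z,g_N}(y)$ in $G$ is contained in $H_\beta$ and coincides with $(H_\beta)_y$, so
\begin{equation*}
\dim G\cdot x=\dim\mathfrak{m}+\dim H_\beta-\dim(H_\beta)_y=n_1+\dim(H_\beta\cdot y),
\end{equation*}
matching the dimension of $D^{n_1}(c_1)\times(H_\beta\cdot y)\times\{z\}$. At $x$, variations of $v'_1$ produce tangent vectors lying in the $\mathfrak{m}$-factor of the chart, while variations of $h\cdot y$ produce tangent vectors in the $\mathcal{V}_{x'}^{N_\beta,N,g_N}$-factor; these two factors are orthogonal in the Euclidean chart structure, so the inclusion is an immersion at $x$. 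Matching dimensions then promote it to a local diffeomorphism onto an open subset of $G\cdot x$.

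For the last claim, the isotropy representation of $H_\beta$ on $\mathcal{V}_{x'}^{N_\beta,N,g_N}$ is orthogonal (being the differential of an isometric action fixing $x'$), so $\|h\cdot y\|=\|y\|$ for every $h\in H_\beta$, placing $H_\beta\cdot y$ on the sphere $S^{n_2-1}(\|y\|)$ inside $D^{n_2}(c_2)$. By $G$-invariance of $N_\beta$, the $g_N$-distance from $x$ to $N_\beta$ equals that from $\exp_{z,g_N}(y)$ to $N_\beta$; since $y$ is normal to $N_\beta$ at $z$ and the $c_i$ can be taken below the injectivity and focal radii, this normal geodesic realizes the distance, giving $c=\|y\|$.

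The main obstacle is technical rather than conceptual: one must verify that the trivialization in the construction of $\phi$ can be chosen $H_\beta$-equivariantly (so that the $H_\beta$-action on $D^{n_2}(c_2)$ really does match the fiberwise isotropy action at every $z\in\mathcal{N}$), and shrink $c_1,c_2,c_3$ enough that $\exp$ is a diffeomorphism on the relevant ball, the normal geodesic from $N_\beta$ realizes the distance, and the chart coordinates $(v_1,v_2,v_3)$ of $x$ fall in the prescribed disks. All of these reduce to a single small enough choice of $c_1,c_2,c_3$, so no essentially new ingredient is required beyond the slice theorem already invoked.
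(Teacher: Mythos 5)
Your argument is correct and is precisely the unwinding of the chart construction that the paper itself invokes (the paper offers no written proof beyond ``by the construction for the diffeomorphism $\phi$''). The one point you flag---choosing the trivialization of $\mathcal{V}^{N_\beta,N,g_N}$ over $\mathcal{N}$ to be $H_\beta$-equivariant, e.g.\ by parallel transport along radial paths with the $H_\beta$-invariant normal connection---is indeed the only detail that must be added to the paper's construction for the identity $\phi(v_1',h\cdot y,z)=g(v_1')h\cdot\exp_{z,g_N}(y)$ and hence the lemma to hold verbatim.
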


The proof is not hard, by the construction for the diffeomorphism $\phi$ described above.

\section{Willmore orbit and the criteria theorem}

Let $(N,g_N)$ be a closed connected Riemannian manifold, which admits
the isometric action of a compact connected Lie group $G$.
Denote $N=\coprod_{\alpha\in\mathcal{A}}N_\alpha$ the orbit type stratification,
$H_\alpha$ and $G/H_\alpha$ the isotropy group type and orbit type for each
$N_\alpha$ respectively.

Denote $M$ any $G$-orbit in some $N_\alpha$.
Then $N_\alpha/G$
can be naturally identified as a submanifold of $\mathcal{M}_{M,N}$, with
each $[x]\in N_\alpha/G$ for $x\in N_\alpha$ mapped to the element represented by the embedding $i:G\cdot x\rightarrow N$ in
$\mathcal{M}_{M,N}$.
Now we are ready to define homogeneous Willmore submanifolds.
\begin{definition}
We will call the $G$-orbit $G\cdot x\subset N_\alpha$ a homogeneous Willmore
submanifold or a Willmore orbit if $[x]\in N_\alpha/G$ is the critical point for the Willmore functional $W_{N,g_N}(\cdot)$.
\end{definition}

The following theorem is a convenient criteria for Willmore orbits.

\begin{theorem}\label{theorem-1}
Let $(N,g_N)$ be a closed connected Riemannian manifold with the
isometric group action of a compact connected Lie group $G$.
An orbit $G\cdot x$ in some $N_\alpha$ for the orbit type
stratification of $N$ is a Willmore orbit iff $[x]\in N_\alpha/G$ is
a critical point for $W(\cdot)|_{N_\alpha/G}$.
\end{theorem}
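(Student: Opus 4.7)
The plan is to split the equivalence into its two directions and handle the nontrivial one via the $G$-invariance of the Willmore functional. The forward direction is immediate from the definition: the natural embedding $N_\alpha/G\hookrightarrow\mathcal{M}_{M,N}$, sending $[x]$ to the class of $G\cdot x\hookrightarrow N$, identifies $N_\alpha/G$ with a finite-dimensional submanifold, so if $DW_{N,g_N}$ vanishes on the whole tangent space $T_{[G\cdot x]}\mathcal{M}_{M,N}$ it automatically vanishes on the subspace $T_{[x]}(N_\alpha/G)$.

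For the converse, the key tool I would use is $G$-invariance. The isometric action of $G$ on $N$ induces an action on $\mathcal{M}_{M,N}$ by $g\cdot[i]=[L_g\circ i]$, where $L_g(y)=g\cdot y$ is the isometry induced by $g$. Because $L_g$ is an isometry, $W_{N,g_N}$ is $G$-invariant on $\mathcal{M}_{M,N}$. When $i(M)=G\cdot x$ is itself a $G$-orbit, $L_g(G\cdot x)=G\cdot x$, so $[G\cdot x]$ is a $G$-fixed point of $\mathcal{M}_{M,N}$, and consequently $DW_{N,g_N}$ at $[G\cdot x]$ is a $G$-invariant linear functional on $T_{[G\cdot x]}\mathcal{M}_{M,N}\cong C^\infty(\mathcal{V}^{G\cdot x,N,g_N})$. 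Averaging an arbitrary normal section with the Haar measure on $G$ does not change the value of this linear functional, so it will suffice to prove that $DW_{N,g_N}$ vanishes on the subspace of $G$-invariant smooth normal sections along $G\cdot x$.

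The last step is to identify this subspace of $G$-invariant normal sections with $T_{[x]}(N_\alpha/G)$. Via the homogeneous bundle description $\mathcal{V}^{G\cdot x,N,g_N}=G\times_{H_\alpha}\mathcal{V}^{G\cdot x,N,g_N}_x$, a $G$-invariant normal section is determined by its value at $x$, which must lie in the $H_\alpha$-fixed subspace $\bigl(\mathcal{V}^{G\cdot x,N,g_N}_x\bigr)^{H_\alpha}$. On the other hand, the local chart of Section 3 applied with $\beta=\alpha$, together with Lemma \ref{lemma-5}, shows that the $g_N$-orthogonal normal slice to $G\cdot x$ at $x$ splits under $H_\alpha$ into its fixed part, which parameterizes precisely the nearby orbits remaining inside $N_\alpha$ and is therefore canonically identified with $T_{[x]}(N_\alpha/G)$, and a complementary part carrying no $H_\alpha$-fixed vectors. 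Under this identification the inclusion $N_\alpha/G\hookrightarrow\mathcal{M}_{M,N}$ matches exactly the inclusion of $G$-invariant sections into all normal sections, so the hypothesis that $[x]$ is a critical point of $W_{N,g_N}(\cdot)|_{N_\alpha/G}$ is equivalent to the vanishing of $DW_{N,g_N}$ on the whole $G$-invariant subspace, which by the averaging step is equivalent to $G\cdot x$ being a Willmore orbit. The main obstacle I expect is the clean verification of this last identification, namely matching the abstract normal-bundle picture of $T_{[G\cdot x]}\mathcal{M}_{M,N}$ with the concrete slice description used to define $N_\alpha/G$; once this bookkeeping is settled, the rest of the argument is a short and essentially formal averaging.
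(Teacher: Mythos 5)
Your proof is correct and follows the same overall strategy as the paper: the forward direction is immediate, and the converse exploits the $G$-invariance of $W_{N,g_N}$ at the fixed point $[G\cdot x]$ to reduce the vanishing of $DW_{N,g_N}$ to its vanishing on the $G$-invariant normal sections, which are then identified with $T_{[x]}(N_\alpha/G)$. The one genuine difference is the mechanism of that reduction. The paper passes to a Sobolev completion $\mathcal{H}=L^{2,k}(\mathcal{V}^{G\cdot x,N,g_N})$, decomposes it orthogonally into $T_{[x]}(N_\alpha/G)$ plus finite-dimensional nontrivial irreducible $G$-representations, and kills $DW_{N,g_N}$ on each nontrivial summand by Schur's Lemma; you instead average an arbitrary normal section over $G$ with Haar measure and use $G$-invariance of the functional to conclude $DW_{N,g_N}(s)=DW_{N,g_N}(\bar s)$ with $\bar s$ invariant. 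Your route is more elementary in that it avoids the Peter--Weyl-type decomposition and the completion altogether, but it quietly uses the interchange $\int_G DW_{N,g_N}(g\cdot s)\,dg=DW_{N,g_N}\bigl(\int_G g\cdot s\,dg\bigr)$, which requires continuity of $DW_{N,g_N}$ on the section space (or that it is given by integration against a smooth density, as the first variation formula shows); this is exactly the analytic point the paper's Sobolev embedding step is there to secure, so you should state it explicitly. Your more detailed justification of the identification of invariant sections with $T_{[x]}(N_\alpha/G)$ via the slice and the $H_\alpha$-fixed subspace of the normal space is a welcome elaboration of what the paper only asserts.
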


\begin{proof} The proof for the "only if" part is trivial. We only need to prove the "if" part.

The $G$-action on $N$ can be naturally transported to a $G$-action on $\mathcal{M}_{M,N}$. When $M$ is a $G$-orbit in some $N_\alpha$ , the point $[x]\in N_\alpha/G\subset \mathcal{M}_{M,N}$ for any $x\in N_\alpha$ is a fixed point for this $G$-action. We want to decompose the tangent space $T_{[x]}\mathcal{M}_{M,N}=C^\infty(\mathcal{V}^{G\cdot x,N,g_N})$ into
the sum of a sequence of finite dimensional irreducible representations for $G$. To make it precise, we first consider the Sobolev completion $\mathcal{H}=L^{2,k}(\mathcal{V}^{G\cdot x,N,g_N})$ of $T_{[x]}\mathcal{M}_{M,N}$. This is a Hilbert space with a countable basis. The action of $G$ can be naturally extended to $\mathcal{H}$. If we define the $L^{2,k}$-norm as
$$||s||_{L^{2,k}}^2=\int_M(||s||^2+||\nabla s||^2+\cdots+||\nabla^k s||^2)d\mathrm{vol}_M,$$
using the norms $||\cdot||$ on sections induced by the $G$-invariant metric $g_N$, and $G$-invariant connections
$\nabla:\Gamma(\mathcal{V}^{G\cdot x,N,g_N}\otimes\bigotimes^{i}T^*M)\rightarrow
\Gamma(\mathcal{V}^{G\cdot x,N,g_N}\otimes\bigotimes^{i+1}T^*M)$, then the action of $G$  preserves the inner product on $\mathcal{H}$. Because the $G$-action on $N$ is isometric, it preserves the Willmore
functional $W_{N,g_N}(\cdot)$ and its tangent map.
When $k$ is sufficiently big, by the Sobolev embedding theorem, the tangent map $DW_{N,g_N}:T_{[x]}\mathcal{M}_{M,N}\rightarrow \mathbb{R}$ of the Willmore functional can be extended to $\mathcal{H}$. So $DW_{N,g_N}:\mathcal{H}\rightarrow\mathbb{R}$ is a $G$-equivariant linear
functional as well.

The trivial representations of $G$ in $\mathcal{H}$ corresponds to all the $G$-invariant sections of $\mathcal{V}^{G\cdot x,N,g_N}$. All these sections give a finite
dimensional subspace in $T_{[x]}\mathcal{M}_{M,N}$ which can be identified
as the tangent space $T_{[x]}(N_\alpha/G)$. So we can orthogonally decompose  $\mathcal{H}$ as $$\mathcal{H}=T_{[x]}(N_\alpha/G)\oplus\mathcal{H}_1\oplus\mathcal{H}_2\oplus\cdots,$$
where each $\mathcal{H}_i$ is a finite dimensional nontrivial irreducible $G$-representation space.

By the assumption that $[x]$ is a critical point for $W_{N,g_N}(\cdot)|_{N_\alpha/G}$,
we have $DW_{N,g_N}=$ $0$ on $T_{[x]}(N_\alpha/G)$. For each $\mathcal{H}_i$, $DW_{N,g_N}(\mathcal{H}_i)=0$
by Schur Lemma. To summarize, $DW_{N,g_N}$ at $[x]$ vanishes identically on $\mathcal{H}$. So $[x]$ is a critical point for the Willmore functional, i.e. $G\cdot x$ is a Willmore orbit.
\end{proof}

An immediate consequence of Theorem \ref{theorem-1} is the following corollary.

\begin{corollary}\label{corollary-1}
Let $(N,g_N)$ be a closed connected Riemannian manifold with the
isometric group action of a compact connected Lie group $G$.
Assume some subset $N_\alpha$ for the orbit type stratification is closed in $N$, then there exists a
Willmore orbit in $N_\alpha$. If $N_\alpha$ is not a single $G$-orbit, it contains
at least two Willmore orbits.
\end{corollary}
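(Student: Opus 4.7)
The plan is to reduce the claim to an elementary max/min argument on a finite dimensional compact manifold, via the criterion supplied by Theorem \ref{theorem-1}. The whole point is that Theorem \ref{theorem-1} has already done the hard infinite-dimensional work: to produce a Willmore orbit in $N_\alpha$ it suffices to produce a critical point of the restriction $W_{N,g_N}(\cdot)|_{N_\alpha/G}$, and on a compact manifold such critical points come for free from maxima and minima.

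First I would verify the relevant compactness and smoothness. Since $N$ is closed and hence compact, and $N_\alpha$ is assumed closed in $N$, the subset $N_\alpha$ is compact; as $G$ is compact and acts continuously, the orbit space $N_\alpha/G$ is a compact topological space. By the discussion in Section 3, $N_\alpha/G$ is moreover a connected smooth (finite dimensional) manifold, naturally embedded as a submanifold of $\mathcal{M}_{M,N}$ for any fixed $G$-orbit $M$ in $N_\alpha$. The Willmore functional $W_{N,g_N}(\cdot)$ is smooth on $\mathcal{M}_{M,N}$ by (\ref{0000}), hence its restriction to $N_\alpha/G$ is a smooth real-valued function on a compact manifold.

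Next I would invoke the elementary fact that any continuous function on a compact space attains its minimum and maximum, and that on a smooth manifold these extrema are critical points of the function. Applying this to $W_{N,g_N}(\cdot)|_{N_\alpha/G}$ yields at least one critical point $[x]\in N_\alpha/G$, and Theorem \ref{theorem-1} then identifies $G\cdot x$ as a Willmore orbit in $N_\alpha$. This establishes the first assertion.

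For the second assertion, assume $N_\alpha$ is not a single orbit, so $N_\alpha/G$ contains at least two points. If $W_{N,g_N}(\cdot)|_{N_\alpha/G}$ is nonconstant, then its minimum and maximum are achieved at two distinct points of $N_\alpha/G$, producing two distinct critical points and hence, via Theorem \ref{theorem-1}, two distinct Willmore orbits in $N_\alpha$. If instead the restricted functional is identically constant, then every point of $N_\alpha/G$ is trivially a critical point, and every $G$-orbit in $N_\alpha$ is a Willmore orbit; in particular there are infinitely many. Either way, at least two Willmore orbits exist. There is no real obstacle here beyond checking compactness and applying the criterion theorem; the entire argument is a standard ``soft'' compactness argument made possible by the fact that Theorem \ref{theorem-1} has already replaced the infinite dimensional critical point problem on $\mathcal{M}_{M,N}$ by a finite dimensional one on $N_\alpha/G$.
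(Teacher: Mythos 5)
Your proof is correct and follows essentially the same route as the paper's: compactness of $N_\alpha/G$ yields minimum and maximum points of $W_{N,g_N}(\cdot)|_{N_\alpha/G}$, which Theorem \ref{theorem-1} converts into Willmore orbits, and the paper likewise takes these two extrema when $N_\alpha$ contains more than one orbit. Your explicit treatment of the case where the restricted functional is constant is only a minor refinement of what the paper leaves implicit.
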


If $N_\alpha$ is a $G$-orbit, then it is obviously a Willmore orbit by Theorem \ref{theorem-1}. If $N_\alpha$ is closed and has more than one orbit, we can find two Willmore orbits in $N_\alpha$ for the minimum and maximum points
of $W_{N,g_N}(\cdot)|_{N_\alpha/G}$.

\section{The orbit collapsing theorem for the relative Willmore functional}

We first define the relative Willmore functional.

Let $N^{n+p}$ be an immersed or embedded submanifold of a Riemannian manifold $(N',g_{N'})$. Then we have the submanifold metric $g_N$ on $N$ induced by $g_{N'}$. Generally speaking we do not need particular assumptions for $N'$, but we need $N$ to be connected and closed. The closeness of $N$ implies the compactness, and then a finite orbit type stratification.
Suppose $M'$ is an immersed or embedded submanifold of $N'$ which intersects with $N$ at a closed connected submanifold $M^n$.

Let $\mathrm{II}_{M',g_{N'}}$ be the second fundamental form of $M'$ in $(N',g_{N'})$, and $\mathrm{II}_{M,M',g_{N'}}$ the restriction of
$\mathrm{II}_{M',g_{N'}}$ to $M$. Using the metric $g_M$ on $M$ induced by $g_{N'}$ and the relative second fundamental form $\mathrm{II}_{M,M',g_{N'}}$, we can similarly define $S_{M,M',g_{N'}}$, $H_{M,M',g_{N'}}$ and $d\mathrm{vol}_{M,g_{N'}}$ as for the Willmore functional. Then the functional defined by
$$W_{N,N',g_{N'}}(M')=\int_{M}
(S_{M,M',g_{N'}}-n||H_{M,M',g_{N'}}||^2)^{n/2}d\mathrm{vol}_{M,g_{N'}}$$
is called the {\it relative Willmore functional}.

We have the following obvious lemmas for the relative Willmore functional.

\begin{lemma}\label{lemma-3}
when $(N',g_{N'})$ is the Riemannian product of $(N,g_N)$ with any
manifold $(D,g_D)$, $N$ is identified with $N\times o$ for any fixed $o\in D$, and $M'=M\times D$ for a closed connected immersed submanifold $M$ in $N$, then we have
$W_{N,N',g_{N'}}(M')=W_{N,g_N}(M)$.
\end{lemma}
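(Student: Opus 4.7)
The plan is to reduce the identity to a pointwise comparison of the integrands on $M = M \times \{o\}$. The essential observation is that all three ingredients entering the definition of the relative Willmore functional, namely the volume form, the second fundamental form, and the mean curvature vector, are unchanged when we pass from $(M \subset N, g_N)$ to $(M \subset M' = M \times D \subset N \times D, g_{N'})$, because of the product structure of the metric.

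First I would check the volume form: since $g_{N'} = g_N \oplus g_D$ and $M = M \times \{o\}$, the induced metric $i^* g_{N'}$ coincides with $i^* g_N = g_M$, so $d\mathrm{vol}_{M,g_{N'}} = d\mathrm{vol}_{M,g_N}$. Next I would verify that $\mathrm{II}_{M,M',g_{N'}} = \mathrm{II}_{M,g_N}$ along $M$. For a Riemannian product the Levi-Civita connection splits as $\nabla^{g_{N'}} = \nabla^{g_N} \oplus \nabla^{g_D}$, and the normal bundle of $M' = M \times D$ in $N'$ is the pullback of the normal bundle of $M$ in $N$. Hence, for tangent vectors $X_1, X_2$ to $M \subset M'$ (which take the form $(X_i, 0)$ under the product decomposition of $TM'$), one has
\begin{equation*}
\mathrm{II}_{M', g_{N'}}\bigl((X_1,0),(X_2,0)\bigr) = \mathrm{II}_{M,g_N}(X_1,X_2),
\end{equation*}
and by definition this is exactly $\mathrm{II}_{M,M',g_{N'}}(X_1,X_2)$.

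Taking $g_M$-traces then gives $H_{M,M',g_{N'}} = H_{M,g_N}$ (both are traces of the same bilinear form with respect to the same metric, divided by $n$), and squared norms give $S_{M,M',g_{N'}} = S_{M,g_N}$, with the norms coinciding because the normal component used to measure them is the $g_N$-normal component, which embeds isometrically into the $g_{N'}$-normal bundle of $M'$. Consequently the integrands of $W_{N,N',g_{N'}}(M')$ and $W_{N,g_N}(M)$ agree pointwise on $M$, and the integrals coincide.

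I do not anticipate a serious obstacle here; the lemma is essentially a bookkeeping statement about product metrics. The only point requiring care is the conceptual one of keeping the three roles of $M$, $M'$, and the restriction operation straight in the definition of the relative second fundamental form, so that one correctly identifies $\mathrm{II}_{M,M',g_{N'}}$ as the evaluation of $\mathrm{II}_{M',g_{N'}}$ on pairs of vectors lying in the $TM$-factor of $TM' = TM \oplus TD$.
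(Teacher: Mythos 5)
Your argument is correct: the paper states this lemma without proof (it is listed as one of the ``obvious lemmas''), and your verification---that for the product metric $g_{N'}=g_N\oplus g_D$ the normal bundle of $M'=M\times D$ in $N'$ along $M\times\{o\}$ is $\mathcal{V}^{M,N,g_N}\oplus 0$, that $\mathrm{II}_{M',g_{N'}}$ restricted to pairs of vectors in the $TM$-factor equals $\mathrm{II}_{M,g_N}$, and that the induced metric, hence the trace, the norms, and the volume form, all coincide---is exactly the intended bookkeeping. Nothing is missing.
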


\begin{lemma}\label{lemma-4}
For any constant $c>0$, we have
$W_{N,N',g_{N'}}(M')=W_{N,N',cg_{N'}}(M')$.
\end{lemma}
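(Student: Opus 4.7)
The plan is to prove the claimed scale-invariance by a direct scaling calculation, tracking how each ingredient in the definition of $W_{N,N',g_{N'}}(M')$ transforms under $g_{N'}\mapsto \tilde g_{N'}:=cg_{N'}$. This is essentially the classical observation that the ordinary Willmore integrand $(S-n\|H\|^2)^{n/2}d\mathrm{vol}$ is invariant under homothetic rescaling of the ambient metric, transferred to the relative setting, where nothing new happens because the restriction procedure commutes with rescaling.

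The first step is to note that the Levi-Civita connection $\nabla^{N'}$ is unchanged under $g_{N'}\mapsto cg_{N'}$ (Christoffel symbols are unaffected by multiplying the metric by a positive constant), and that the splitting of $TN'|_{M'}$ into $TM'$ and its $g_{N'}$-orthogonal complement depends only on $g_{N'}$ up to scale. Hence $\mathrm{II}_{M',\tilde g_{N'}}=\mathrm{II}_{M',g_{N'}}$ as normal-bundle-valued symmetric bilinear forms on $TM'$, and the same equality holds for the restrictions $\mathrm{II}_{M,M',g_{N'}}$ to $M$. Thus all the scaling in the integrand comes from the metrics used to contract and integrate, not from the second fundamental form itself.

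Next I would assemble the weights. The induced metric $g_M$ rescales to $cg_M$, so the inverse tangential metric picks up a factor $c^{-1}$, the induced inner product on the normal bundle picks up a factor $c$, and $d\mathrm{vol}_{M,\tilde g_{N'}}=c^{n/2}d\mathrm{vol}_{M,g_{N'}}$. Since $S_{M,M',g_{N'}}$ is a contraction of $\mathrm{II}\otimes\mathrm{II}$ by two copies of the inverse tangential metric and one copy of the normal metric, $S_{M,M',\tilde g_{N'}}=c^{-1}S_{M,M',g_{N'}}$. A parallel bookkeeping gives $\|H_{M,M',\tilde g_{N'}}\|^2=c^{-1}\|H_{M,M',g_{N'}}\|^2$, since $H$ itself rescales to $c^{-1}H$ and the normal inner product contributes an extra factor of $c$.

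Combining these, $(S_{M,M',\tilde g_{N'}}-n\|H_{M,M',\tilde g_{N'}}\|^2)^{n/2}$ acquires an overall factor $c^{-n/2}$, which is exactly cancelled by the $c^{n/2}$ from $d\mathrm{vol}_{M,\tilde g_{N'}}$. The integral is therefore unchanged, proving the lemma. There is no real obstacle here; the only point requiring minor care is to remember that once the ambient metric is rescaled, the metric used to measure $\|H\|^2$ is rescaled as well, without which the compensating $c$-factor on the normal side would be missed and the cancellation would fail.
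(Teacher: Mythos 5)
Your proof is correct and is exactly the routine scaling computation the paper omits (the lemma is stated there as "obvious" with no proof given): the second fundamental form is unchanged under $g_{N'}\mapsto cg_{N'}$, $S$ and $\|H\|^2$ each pick up $c^{-1}$, the volume form picks up $c^{n/2}$, and the factors cancel. Nothing further is needed.
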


The orbit collapsing theorem for relative Willmore functional is the following.

\begin{theorem}\label{collapsing-thm-2}
Let $(N,g_N)$ and $G$ be the same as in Theorem \ref{main-thm}, and the orbit type
stratification $N=\coprod_{\alpha\in\mathcal{A}}N_\alpha$ satisfy the same
assumptions (1) and (2).
Let $D=D_1\times D_2\subset \mathbb{R}^{q_1}\times \mathbb{R}^{q_2}$ be any open set in $\mathbb{R}^q$ endowed with the standard
Euclidean metric $g_D=g_{D_1}\oplus g_{D_2}$. Denote
$g_{N',0}=g_N\oplus g_D$ the product metric on $N'=N\times D$.
Then for
\begin{description}
\item{\rm (1)} Any sequence of Riemannian metrics $g_{N',n}$ on $N'$, which
converge to $g_{N',0}$ in the $C^\infty$-topology,
\item{\rm (2)} Any sequence $x_n$ in the
same stratified subset $N_\alpha$, such that $\lim_{n\rightarrow\infty}x_n=x'\in N_\beta\subset \partial N_\alpha$,
\item{\rm (3)} Any sequence $z_n$ in $D$ with $\lim z_n=z\in D$,
\end{description}
we have
$$\lim_{n\rightarrow\infty}W_{N_n,N',g_{N',n}}(M'_n)=\infty,$$
where $N_n=N\times z_n$ and $M'_n=(G\cdot x_n)\times D_1$.
\end{theorem}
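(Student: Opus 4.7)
The plan is to induct on the relative complexity degree $d=d(\alpha,\beta)$ defined in Section~3, combined with a localization (rescaling) argument at $x'\in N_\beta$. Using the standard chart $\phi$ of Section~3 together with Lemma~\ref{lemma-5}, after passing to a subsequence the local piece of $G\cdot x_n$ factors as $D^{n_1}(c_1)\times(H_\beta\cdot y_n)\times\{w_n\}$ inside $D^{n_1}(c_1)\times D^{n_2}(c_2)\times D^{n_3}(c_3)$, with $c_n:=\|y_n\|$ comparable to the $g_N$-distance from $x_n$ to $N_\beta$, and $H_\beta$ acting on $D^{n_2}(c_2)$ by the slice (isotropy) representation. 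By compactness we may further arrange $y_n/c_n\to y^*\in S^{n_2-1}(1)$, $w_n\to 0$, and $z_n\to z\in\overline{D}$; let $H_\gamma$ denote the $H_\beta$-isotropy of $y^*$.

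\textbf{Base case ($d=0$).} I would first argue that $\gamma=\alpha$. Indeed, $H_\alpha\subseteq H_\gamma$ because $y_n/c_n\to y^*$, so $N_\gamma\subseteq\partial N_\alpha$ near $x'$; and $N_\beta\subseteq\partial N_\gamma$ because $y^*$ lies in the slice at $x'\in N_\beta$. If $\gamma\notin\{\alpha,\beta\}$ this chain witnesses $d(\alpha,\beta)\geq 1$, and $\gamma=\beta$ is excluded because the $H_\beta$-fixed set of the slice is $\{0\}$. Hence $H_\beta\cdot y^*\cong H_\beta/H_\alpha$ is a smooth orbit of dimension $k=\dim H_\beta-\dim H_\alpha\geq 1$ by condition~(2). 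Since $H_\beta\cdot y_n$ lies in the small sphere $S^{n_2-1}(c_n)$, decomposing $\mathrm{II}_{M_n,N}$ along the sphere's radial direction (writing $n=\dim M_n=n_1+k$) yields, in the product limit metric, the pointwise bound
\begin{equation*}
S-n\|H\|^2\ \geq\ \tfrac{kn_1}{n}\cdot c_n^{-2}
\end{equation*}
on the local piece. The $C^\infty$-controlled perturbation $g_{N',n}\to g_{N',0}$ preserves this bound up to a multiplicative factor, and the piece has $g_N$-volume $\gtrsim \mathrm{vol}(D^{n_1}(c_1))\cdot c_n^k$, so integration gives
\begin{equation*}
W_{N_n,N',g_{N',n}}(M'_n)\ \gtrsim\ c_n^{-n}\cdot c_n^k\ =\ c_n^{-n_1}\ \to\ \infty,
\end{equation*}
where $n_1=\dim G/H_\beta\geq 1$ by condition~(1) at $\beta$; Lemma~\ref{lemma-3} handles the flat $D_1$-factor of $M'_n$.

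\textbf{Inductive step ($d\geq 1$).} If $\gamma=\alpha$ the base-case computation applies. Otherwise $H_\gamma\supsetneq H_\alpha$ with strict increase in dimension (condition~(2) applied to $(\alpha,\gamma)$), so the rescaled orbits $H_\beta\cdot(y_n/c_n)\subset S^{n_2-1}(1)$ themselves collapse to the smaller orbit $H_\beta\cdot y^*$. By Lemma~\ref{lemma-4} we may rescale the metric by $c_n^{-2}$ at no cost to the relative Willmore functional; the rescaled metric converges smoothly on fixed compacta to a Euclidean product on $\mathbb{R}^{n_1+n_2+n_3}\times D$, and the three chart disks expand to $\mathbb{R}^{n_1},\mathbb{R}^{n_2},\mathbb{R}^{n_3}$. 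Now reinterpret the problem as a new instance of Theorem~\ref{collapsing-thm-2} with ambient $N^{\mathrm{new}}=S^{n_2-1}(1)$ and group $G^{\mathrm{new}}=H_\beta$: $y_n/c_n$ plays the role of $x_n$, $y^*$ plays the role of $x'$, and the residual flat factors $\mathbb{R}^{n_1}$, $\mathbb{R}^{n_3}$ are absorbed into enlarged products $D_1^{\mathrm{new}}\supseteq\mathbb{R}^{n_1}\times D_1$ and $D_2^{\mathrm{new}}\supseteq\mathbb{R}^{n_3}\times D_2$. Conditions~(1) and (2) transfer to the new action because the slice stratification of $H_\beta$ at $x'$ records exactly the $G$-strata whose closures contain $x'$; and the new relative complexity degree satisfies $d'(\alpha',\gamma')<d(\alpha,\beta)$, since any maximal chain in the slice from $\alpha'$ to $\gamma'$, prepended by $\beta$, produces a strictly longer chain from $\alpha$ to $\beta$ in the original. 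The induction hypothesis then closes the argument.

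The hard part will be the bookkeeping in the inductive step, specifically (i) verifying that conditions~(1) and (2) pass cleanly to the smaller action of $H_\beta$ on $S^{n_2-1}(1)$, (ii) establishing the strict decrease of relative complexity degree, and (iii) absorbing the accumulated Euclidean factors from iterated localizations into $D_1\times D_2$. Point (iii) is precisely why Theorem~\ref{collapsing-thm-2} is stated for the \emph{relative} Willmore functional with an arbitrary product factor, rather than for Theorem~\ref{collapsing-thm-1} on the plain Willmore functional: without allowing $D$ to grow, the successive localizations could not be combined. The uniform $C^\infty$-convergence $g_{N',n}\to g_{N',0}$ provides the requisite uniform control on all curvature-type quantities in a fixed coordinate neighborhood of $x'$, converting the product-metric estimates into estimates for the perturbed metrics.
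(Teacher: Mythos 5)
Your overall architecture coincides with the paper's: induction on the relative complexity degree, localization at $x'$ in the standard chart of Section~3, rescaling by the distance $r_n=d(x_n,N_\beta)$, and using the product factors $D_1\times D_2$ of the relative Willmore functional to absorb the flat directions created by iterated localization. Your base case is essentially the paper's argument in a slightly different packaging: you integrate the pointwise bound $S-n\|H\|^2\gtrsim c_n^{-2}$ directly over a fixed-size piece of the orbit, whereas the paper rescales to a unit-scale model (so the integrand is bounded below by a constant) and then counts $l(n)\to\infty$ disjoint $G$-translates of the shrinking neighborhood; both give the rate $c_n^{-n_1}\to\infty$ and both use conditions (1) and (2) in the same places. (The rescaled formulation is also what makes your phrase ``preserves this bound up to a multiplicative factor'' rigorous, since the $C^\infty$-convergence $g_{N',n}\to g_{N',0}$ is at unit scale while your curvature bound lives at scale $c_n$.)

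The genuine gap is in the inductive step, at the sentence ``reinterpret the problem as a new instance of Theorem~\ref{collapsing-thm-2} \ldots the induction hypothesis then closes the argument.'' The quantity you must bound is $\int_{\tilde M_n}(S-n_4\|H\|^2)^{n_4/2}$, taken over the full $n_4$-dimensional orbit piece $\tilde M_n\cong D^{n_1}\times(H_{\beta,o}\cdot y_n)$, with the second fundamental form of $\tilde M'_n$ in the full ambient space, including the radial normal to the slice sphere $S^{n_2-1}(1)$. The quantity the induction hypothesis controls is $\int_{\hat M_n}(\hat S-(n_4-n_1)\|\hat H\|^2)^{(n_4-n_1)/2}$, taken over the $(n_4-n_1)$-dimensional orbit $H_{\beta,o}\cdot y_n$, with ambient $S^{n_2-1}(1)\times(\mathrm{flat})$, i.e.\ with the radial direction removed. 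These are different functionals with different exponents and different integration domains, and the $n_1$ flat directions cannot simply be ``absorbed into $D_1^{\mathrm{new}}$'': the $D_1$-factor of $M'$ is by construction transverse to $N$ and is excluded from the integral, whereas your $\mathbb{R}^{n_1}$ consists of tangent directions of $M_n=M'_n\cap N_n$ that enter both the domain of integration and the exponent $n_4/2$. Bridging this is the actual content of the paper's Section~7: one splits the integrand as $P_{1,n}+P_{2,n}$ (radial versus tangential normal contributions), uses the base-case bound $P_{1,n}>C_1$ together with the weighted inequality $(P_{1,n}+P_{2,n})^{n_4/2}\geq C\,P_{1,n}^{n_1/2}P_{2,n}^{(n_4-n_1)/2}$ to fix the exponent mismatch, proves that $P_{2,n}$ dominates the new integrand $\hat P_n$ (the paper's Lemma~\ref{lemma-2}, which compares the trace-free part of the full second-fundamental-form matrix $A_{i,n}$, normalized by $n_4$, with that of its $(n_4-n_1)\times(n_4-n_1)$ block $B_{i,n}$, normalized by $n_4-n_1$ --- this is not automatic because the mean-curvature subtraction changes), and then applies Fubini over $D^{n_1}\times D^{q_1}$ before invoking the inductive hypothesis. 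Your items (i) and (ii) (transfer of conditions (1)--(2) to the slice action and strict decrease of the complexity degree) are real but routine and are treated as such in the paper; the missing comparison of functionals is the step without which the induction does not close.
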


The proof of Theorem \ref{collapsing-thm-2} will occupy the next two sections.
Here we only use it to prove Theorem \ref{collapsing-thm-1} and Theorem \ref{main-thm}.
\bigskip

{\noindent\textbf{Proof of Theorem \ref{collapsing-thm-1}} }
Take $D=D_1$, a constant family of metrics $g_{N',n}= g_{N',0}$, and a constant sequence $z_n=z$,
then Theorem \ref{collapsing-thm-1} follows Lemma \ref{lemma-3} and
Theorem \ref{collapsing-thm-2} immediately.
{\ \rule{0.5em}{0.5em}}
\bigskip

{\noindent\textbf{Proof of Theorem \ref{main-thm}} }
Theorem \ref{collapsing-thm-1} implies $W_{N,g_N}(\cdot)|_{N_\alpha/G}$ has a
minimum point $[x]$ for some $x\in N_\alpha$. Then using Theorem \ref{theorem-1}, $G\cdot x$ is a Willmore orbit.
{\ \rule{0.5em}{0.5em}}

\section{Proof of Theorem \ref{collapsing-thm-2}}

The proof of Theorem \ref{collapsing-thm-2} is an induction for the relative complexity degree between $N_\alpha$ and $N_\beta$.

Let $(N,g_N)$ be a closed connected manifold admitting
the isometric action of a compact connected Lie group $G$.
Denote
all the subsets for its orbit type stratification as $N_\alpha$, $\alpha\in\mathcal{A}$, and assume they satisfy the conditions (1) and
(2) in Theorem \ref{collapsing-thm-2}.

We will first prove Theorem \ref{collapsing-thm-2} when the relative
complexity degree between $N_\alpha$ and $N_\beta$ is 0, which will occupy
the rest of this section.

Denote $n_1=\dim G\cdot x'$, $n_2=\dim N-\dim N_\beta$, $n_3=\dim N_\beta-\dim G\cdot x' $. We may assume $n$ is sufficiently big, such that for each $n$,
we can find a $x'_n\in N_\beta$, such that $$\lim_{n\rightarrow\infty}x'_n=x'\quad\mbox{and}\quad d(x_n,x'_n)=d(x_n,N_\beta)=r_n\rightarrow 0\mbox{ when }n\rightarrow\infty,$$
where the distance function on $N$ is induced by $g_N$.
Then we have
$$x_n\in
\exp_{x'_n,g_N} \mathcal{V}_{x'_n}^{N_\beta,N,g_N},$$ where the exponential map and the normal subspace at $x'_n$ is defined with respect to $g_N$.
As we have shown in Section 3, we can construct a diffeomorphism $\phi$ as in (\ref{local-model-without-parameter-space}) from
an open neighborhood $\mathcal{U}$ of $(x',z)$ in $N'=N\times D$, to
the product $$D^{n_1}(c_1)\times D^{n_2}(c_2)\times (D^{n_3}(c_3)\times D^{q_1}(c_4)\times D^{q_2}(c_5)),$$
using
the product metric $g_{N',0}$, where the open disks $D^{n_i}(c_i)$'s correspond to neighborhood of $(x',z)$ in the $G\cdot x'$ direction, the $g_N$-normal direction of $N_\beta$ in $N$ (which is also the $g_{N',0}$-normal direction of $N_\beta\times D$ in $N\times D$), and the normal direction of $(G\cdot x')\times z$ in $N_\beta\times D_1\times D_2$ respectively.

When $n$ is sufficiently large,
the points $x_n$ can be chosen from $\mathcal{U}$ such that
$\phi^{-1}(x_n,z_n)$ $=(y'_{n},y''_{n},y'''_{n})$ with $y'_n$, $y''_n$ and $y'''_n$ all converging to the origin. Then we also have
$\phi^{-1}(x'_n,z_n)=(y'_n,o,y'''_n)$.
Define the metrics $g'_{N',n}=r_n^{-1}g_{N',n}$ on $N\times D$, and diffeomorphisms
$$\phi_n(v_1,v_2,v_3)=\phi(r_n v_1 +y'_n,r_n v_2, r_n v_3 + y'''_n).$$
Then for sufficiently large $n$, we can find a suitable neighborhood $\mathcal{U}_n$ of $(x'_n,z_n)$ in $N\times D$, such that
$\phi_n^{-1}(\mathcal{U}_n)=D^{n_1}(1)\times D^{n_2}(2)\times (D^{n_3}(1)\times D^{q_1}(1)\times D^{q_2}(1))$, such that $\phi_n$ maps the origin to $(x'_n,z_n)$, and
the metrics $\tilde{g}_{\tilde{N}',n}=\phi_n^* g'_{N',n}=r_n^{-1}\phi_n^* g_{N',n}$ on $\tilde{N}'=D^{n_1}(1)\times D^{n_2}(2)\times (D^{n_3}(1)\times D^{q_1}(1)\times D^{q_2})$ converges in the $C^\infty$-topology, to the product metric $\tilde{g}_{N',0}$ of the
standard Euclidean metrics on each factor.

Let $H_{\beta,o}$ be the identity component of $H_\beta$, and $y_n=r_n^{-1}y''_n$. Then by Lemma \ref{lemma-5}, $H_{\beta,o}\cdot y_n\subset S^{n_2-1}(1)$. Here we summarize some notations for the standard model space
$\tilde{N}'$,
\begin{eqnarray*}
\tilde{N}'&=& D^{n_1}(1)\times D^{n_2}(2)\times (D^{n_3}(1)\times D^{q_1}(1)\times D^{q_2}(1)),\\
\tilde{S}'&=& D^{n_1}(1)\times S^{n_2-1}(1)\times
(D^{n_3}(1)\times D^{q_1}(1)\times D^{q_2}(1)),\\
\tilde{N}_n &=& D^{n_1}(1)\times D^{n_2}(2)\times (D^{n_3}(1)\times o),\\
\tilde{M}'_n&=& D^{n_1}(1)\times(H_{\beta,o}\cdot y_n)\times (o\times D^{q_1}(1)\times o)\subset \tilde{S}',\\
\tilde{M}_n &=& D^{n_1}(1)\times(H_{\beta,o}\cdot y_n)\times o=\tilde{N}_n\cap\tilde{M}'_n,\\
\tilde{g}_{\tilde{N}',n} &=& \phi_n^* g'_{N',n},\mbox{ and}\\
\tilde{g}_{\tilde{N}',0} &=& \lim_{n\rightarrow\infty}\tilde{g}_{\tilde{N}',n}.\\
\end{eqnarray*}
Notice that the $\phi_n$-images of $\tilde{M}'_n$, $\tilde{N}_n$ and $\tilde{M}_n$ are some neighborhoods of $(x_n,z_n)$ in $M'_n=(G\cdot x_n)\times D_1$,
$N_n=N\times z_n$ and $M_n=(G\cdot x_n)\times z_n$ respectively. By the assumption (2) in Theorem
\ref{collapsing-thm-2}, $n_4=\dim G\cdot x_n>n_1=\dim G\cdot x'$, so $\dim {H}_{\beta,o}\cdot y_n=n_4-n_1>0$. The metric
$\tilde{g}_{\tilde{N}',0}$ is the product of standard Euclidean metrics on all factors.

Now we prove for any $n$ sufficiently big,
\begin{equation}\label{0008}
\int_{\tilde{M}_n}
(S_{\tilde{M}_n,\tilde{M}'_n,\tilde{g}_{\tilde{N}',n}}-n_4
||H_{\tilde{M}_n,\tilde{M}'_n,\tilde{g}_{\tilde{N}',n}}||^2)^{n_4/2}
d\mathrm{vol}_{\tilde{M}_n,\tilde{g}_{\tilde{N}',n}}>C,
\end{equation}
where $C$ is a positive universal constant.

Let $\mathbf{n}_n$ and $\mathbf{n}$ be the outgoing unit normal vector fields
of $\tilde{S}'$ in
$\tilde{N}'$ with respect to
$\tilde{g}_{\tilde{N}',n}$ and $\tilde{g}_{\tilde{N}',0}$ respectively.
Because $\tilde{g}_{\tilde{N}',n}$ converges to $\tilde{g}_{\tilde{N}',0}$
in the $C^\infty$-topology,
$\mathbf{n}_n$ also converges to $\mathbf{n}$ in the $C^\infty$-topology.
The principal
curvature of $\tilde{M}'_n$ in $\tilde{N}'$, for the normal vector field $\mathbf{n}$, with respect to $\tilde{g}_{\tilde{N}',0}$, equals 1 in the directions of the
$H_{\beta,o}\cdot y_n\subset S^{n_2-1}(1)$, and equals 0 in the directions of $D^{n_1}(1)$. Notice that $n_1>0$ by the assumption (1) of the theorem. So when
we change $\tilde{g}_{\tilde{N}',0}$ and $\mathbf{n}$ to $\tilde{g}_{\tilde{N}',n}$ and $\mathbf{n}_n$ respectively, those principal curvatures are still close to 1 and 0 respectively.

Assume $\lambda_{1,n}\leq\cdots\leq\lambda_{n_4+q_2,n}$ are all the eigenvalues
of $\mathrm{II}_{\tilde{M}_n,\tilde{M}'_n,\tilde{g}_{\tilde{N}',n}}(\mathbf{n}_n)$, i.e. the restriction of the second fundamental form of $\tilde{M}'_n$ to $\tilde{M}_n$, with respect to any local $\tilde{g}_{\tilde{N}',n}$-orthonormal tangent frame of $\tilde{M}_n$ and the normal vector field $\mathbf{n}_n$. Then when $n$ is sufficiently big, we have $\lambda_{1,n}\leq1/3$ and $\lambda_{n_4,n}\geq 2/3$. There exists a universal constant $C_1>0$, such that
\begin{eqnarray}\label{0010}
S_{\tilde{M}_n,\tilde{M}'_n,\tilde{g}_{\tilde{N}',n}}-
n_4||H_{\tilde{M}_n,\tilde{M}'_n,\tilde{g}_{\tilde{N}',n}}||^2
\geq
\sum_{i=1}^{n_4}\lambda_{i,n}^2-\frac{1}{n_4}
(\sum_{i=1}^{n_4}\lambda_{i,n})^2>C_1.
\end{eqnarray}

Notice that all $H_{\beta,o}\cdot y_n$ belong to the same subset for the orbit type stratification of the $H_{\beta,o}$-action on $S^{n_2-1}(1)$. The
orbits $H_{\beta,o}\cdot y_n$ in $S^{n_2-1}(1)$ can not have any collapsing subsequence, otherwise we can insert a stratified subset $N_\gamma$ between
$N_\alpha$ and $N_\beta$ which is a contradiction that the relative complexity degree between $N_\alpha$ and $N_\beta$ is 0.
So there exists a positive lower bound for their
volumes, with respect to the metric $\tilde{g}_{\tilde{N}',0}$.
By the approximation of metrics, we also have
\begin{equation}\label{0011}
\mathrm{vol}(\tilde{M}_n, \tilde{g}_{\tilde{N}',n})
>C_2>0,
\end{equation}
where $C_2$ is universal constant.

Summarize (\ref{0010}) and (\ref{0011}), we get
\begin{equation}\label{0099}
\int_{\tilde{M}_n}
(S_{\tilde{M}_n,\tilde{M}'_n,\tilde{g}_{\tilde{N}',n}}
-n_4||H_{\tilde{M}_n,\tilde{M}'_n,\tilde{g}_{\tilde{N}',n}}||^2)^{n_4/2}
d\mathrm{vol}_{\tilde{M}_n,\tilde{g}_{\tilde{N}',n}}> C=
C_1^{n_4/2}C_2>0,
\end{equation}
which proves (\ref{0008})

To finish the proof of theorem when the relative complexity degree between $N_\alpha$ and $N_\beta$ is 0, we only need to
observe that $\mathcal{U}_n$ gets infinitely tiny when $n$ goes to infinity.
For each $n$, we denote $l(n)$ the maximal number of different group elements $g_i\in G$, $1\leq i\leq l(n)$, such that all $(g_i\cdot\mathcal{U}_n)\cap M$'s are disjoint with each other. Obviously we have $\lim_{n\rightarrow\infty}l(n)=+\infty$. When $n$ is sufficiently large, we can use similar argument as above and the uniform approximation of the metrics near $N$ to get the estimate
(\ref{0099}) with $\mathcal{U}_n$ changed to $g_i\cdot\mathcal{U}_n$ and $g_i\cdot\tilde{M}_n$ for each $i$. Then we can
 use Lemma \ref{lemma-4} and observe
\begin{eqnarray}\label{0009}
 W_{N_n,N',g_{N',n}}((G\cdot x_n)\times D)=W_{N_n,N',r_n^{-1}g_{N',n}}(
(G\cdot x_n)\times D) 
> l(n)\cdot C\rightarrow+\infty.
\end{eqnarray}

This ends the proof of Theorem \ref{collapsing-thm-2}.

In the case that $N$ is of cohomogeneity one with respect to the $G$-action, the argument in this section is enough to prove Thereom \ref{collapsing-thm-2}
and then Theorem \ref{collapsing-thm-1} and Theorem \ref{main-thm}, i.e. there exists a principal Willmore orbit.
An interesting observation is that similar techniques can be applied to prove the existence of a Willmore isoparametric
hypersurface when the ambient space is a standard sphere.
As this is not a new result \cite{Li2001}, and
not very relevant to the main theme of this work. We will
discuss it in detail.

\section{Proof of Theorem \ref{collapsing-thm-2} continued}

We continue to prove Theorem \ref{collapsing-thm-2} with the assumption that the theorem is valid when the relative complexity degree between $N_\alpha$ and $N_\beta$ is smaller than $d$.

Now we assume the relative complexity degree between $N_\alpha$ and $N_\beta$
is $d$.
We can apply exactly the same argument in the last section to reduce our
discussion to the standard model $\tilde{N}'$,
until we reach the step to prove (\ref{0008}), i.e.
\begin{equation*}
\int_{\tilde{M}_n}(S_{\tilde{M}_n,\tilde{M}'_n,\tilde{g}_{\tilde{N}',n}}
-n_4
||H_{\tilde{M}_n,\tilde{M}'_n,\tilde{g}_{\tilde{N}',n}}||^2
)^{n_4/2}d\mathrm{vol}_{\tilde{M}_n,\tilde{g}_{\tilde{N}',n}}>C>0,
\end{equation*}
where $C$ is a positive universal constant. The reason is that
when $H_{\beta,o}\cdot y_n$ collapse to an orbit with a low dimension, we can not get a positive lower bound estimate for their volumes.

Passing to suitable subsequences, we may assume $y_n\subset  S^{n_2-1}(1)$ converges to
$y\in S^{n_2-1}(1)$. If $H_{\beta,o}\cdot y$ has the same
dimension as $H_{\beta,o}\cdot y_n$, then (\ref{0010}) and (\ref{0011})
are still valid, from which we can get (\ref{0008}).
So we only need to consider the case that
$\dim H_{\beta,o}\cdot y<\dim H_{\beta,o}\cdot y_n$.

We choose the local normal vector fields $\mathbf{n}_{i,n}$, $1\leq i\leq n_1+n_2+n_3+q_2-n_4$ on $\tilde{M}'_n$, satisfying the following conditions:
\begin{description}
\item{\rm (1)} $\mathbf{n}_{1,n}=\mathbf{n}_{n}$, and other $\mathbf{n}_{i,n}$'s are tangent to $\tilde{S}'$.
\item{\rm (2)} For each $n$ and at each point, all $\mathbf{n}_{i,n}$'s provide an $\tilde{g}_{\tilde{N}',n}$-orthonormal basis for the fiber of $\mathcal{V}^{\tilde{M}'_n,\tilde{N}',\tilde{g}_{\tilde{N}',n}}$.
\end{description}

The term
$$S_{\tilde{M}_n,\tilde{M}'_n,\tilde{g}_{\tilde{N}',n}}-n_4
||H_{\tilde{M}_n,\tilde{M}'_n,\tilde{g}_{\tilde{N}',n}}||^2$$
can be locally presented using $\mathbf{n}_{i,n}$. It is the sum of two parts. The first part
$P_{1,n}$ contains all the terms for $\mathbf{n}_{1,n}=\mathbf{n}_n$,
and $P_{2,n}$ contains all the terms for $\mathbf{n}_{i,n}$ with $i>1$.
Both $P_{1,n}$ and $P_{2,n}$ are non-negative functions. In particular,
because (\ref{0010}) is still valid when $n$ is sufficiently big, i.e. $P_{1,n}>C_1>0$,
then we have
\begin{eqnarray}
& &(S_{\tilde{M}_n,\tilde{M}'_n,\tilde{g}_{\tilde{N}',n}}-n_4
||H_{\tilde{M}_n,\tilde{M}'_n,\tilde{g}_{\tilde{N}',n}}||^2)^{n_4/2}\nonumber\\
&=& (P_{1,n}+P_{2,n})^{n_4/2}
\geq C_2 P_{1,n}^{n_1/2}P_{2,n}^{(n_4-n_1)/2}
> C_1^{n_1/2}C_2 P_{2,n}^{(n_4-n_1)/2},\label{0016}
\end{eqnarray}
where $C_2$ is a positive universal constant.

Fix any $v_1\in D^{n_1}(1)$ and $v_2\in D^{q_1}(1)$, we denote
\begin{eqnarray*}
\hat{{N}}'&=& \tilde{S}'=D^{n_1}(1)\times S^{n_2-1}(1)\times (D^{n_3}(1)\times D^{q_1}(1)\times D^{q_2}(1)),\\
\hat{{N}} &=& v_1\times S^{n_2-1}(1)\times v_2,\\
\hat{{D}} &=& \hat{D}_1\times \hat{D}_2= (D^{n_1}(1)\times D^{q_1}(1))\times D^{q_2}(1),\\
\hat{M}_n &=& v_1\times (H_{\beta,o}\cdot y_n)\times v_2,\\
\hat{{M}}'_n &=& \tilde{M}'_n= \hat{M}_n\times \hat{D}_1=
D^{n_1}(1) \times (H_{\beta,o}\cdot y_n)\times (o \times D^{q_1}(1)\times o),\\
\hat{g}_{\hat{N}',n}&=&\tilde{g}_{\tilde{N}',n}|_{\hat{N}'},\\
\hat{g}_{\hat{N}',0}&=&\lim_{n\rightarrow\infty}\hat{g}_{\hat{N}',n}
\mbox{ in the }C^\infty\mbox{-topology}.
\end{eqnarray*}
Notice $\hat{g}_{\hat{N}',0}$ is just the Riemannian product of the standard metrics for all factors. The standard metric on the sphere factor is $H_{\beta,o}$-equivariant.

The complexity degree between the
subset containing all $H_{\beta,o}\cdot y_n$ and that containing $H_{\beta,o}\cdot y$, for the orbit type stratification of the $H_{\beta,o}$-action on $S^{n_2-1}(1)$ is at most $d-1$, where $d$ is the relative complexity between $N_\alpha$ and $N_\beta$.
Using the inductive assumption for $\hat{D}=\hat{D}_1\times \hat{D}_2$, $\hat{N}'$, $\hat{N}$ and $\hat{M}'_n$, for any fixed $v_1\in D^{n_1}(1)$ and
$v_2\in D^{q_1}(1)$, the corresponding $W_{\hat{N},\hat{N}',\hat{g}_{\hat{N}',n}}(\hat{M}'_n)$, i.e.
\begin{eqnarray}\label{0012}
\int_{\hat{M}_n}(S_{\hat{M}_n,\hat{M}'_n,\hat{g}_{\hat{N}',n}}
-(n_4-n_1)||H_{\hat{M}_n,\hat{M}'_n,\hat{g}_{\hat{N}',n}}||^2)^{(n_4-n_1)/2}
d\mathrm{vol}_{\hat{M}_n,\hat{g}_{\hat{N},n}}
\end{eqnarray}
goes to infinity.

Comparing the term $$\hat{P}_n=S_{\hat{M}_n,\hat{M}'_n,\hat{g}_{\hat{N}',n}}
-(n_4-n_1)||H_{\hat{M}_n,\hat{M}'_n,\hat{g}_{\hat{N}',n}}||^2$$ in (\ref{0012}) and
the term $P_{2,n}$ in
$$S_{\tilde{M}_n,\tilde{M}'_n,\tilde{g}_{\tilde{N}',n}}-n_4
||H_{\tilde{M}_n,\tilde{M}'_n,\tilde{g}_{\tilde{N}',n}}||^2,$$
containing all the terms for $\mathbf{n}_{i,n}$ with $i>1$,
we claim
\begin{lemma}\label{lemma-2} For any $v_1\in D^{n_1}(1)$ and at any point
of $\hat{M}_n$, we have
$\hat{P}\leq P_{2,n}$.
\end{lemma}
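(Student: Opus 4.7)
The plan is to identify both $\hat{P}_n$ and $P_{2,n}$ pointwise as the ``squared-traceless-norm'' functional applied to a single bilinear form on a nested pair of tangent subspaces, and then reduce Lemma \ref{lemma-2} to a linear-algebraic inequality.

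First, I will make the geometric identification explicit. At a point $p = (v_1, y, v_2) \in \hat{M}_n$, let $B$ denote the second fundamental form of $\tilde{M}'_n$ inside $\tilde{S}'$ with respect to $\tilde{g}_{\tilde{N}',n}$. Since $\tilde{S}' \hookrightarrow \tilde{N}'$ has unit normal $\mathbf{n}_{1,n}$, the remaining frame $\mathbf{n}_{2,n}, \ldots, \mathbf{n}_{n_1+n_2+n_3+q_2-n_4,n}$ spans the normal bundle of $\tilde{M}'_n$ in $\tilde{S}'$, and $B$ coincides with the component of $\mathrm{II}_{\tilde{M}'_n, \tilde{N}', \tilde{g}_{\tilde{N}',n}}$ in these directions. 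Set $V := T_pD^{n_1}(1) \oplus T_y(H_{\beta,o}\cdot y_n)$, the horizontal subspace at $p$ (tangent to the $v_2$-translate of $\tilde{M}_n$ through $p$; when $v_2 = o$ this is exactly $T_p\tilde{M}_n$, and for other $v_2$ the construction of $P_{2,n}$ extends to $p$ via this translate, since only the tangent data enters its definition), and $V' := T_p\hat{M}_n = T_y(H_{\beta,o}\cdot y_n) \subset V$. Unwinding the definitions, $P_{2,n}(p) = \|B|_V\|^2 - (\dim V)^{-1}\|\mathrm{tr}_V B|_V\|^2$ and $\hat{P}_n(p) = \|B|_{V'}\|^2 - (\dim V')^{-1}\|\mathrm{tr}_{V'} B|_{V'}\|^2$, with all norms and traces taken with respect to the induced metric from $\tilde{g}_{\tilde{N}',n}$.

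This reduces Lemma \ref{lemma-2} to the following linear-algebraic claim: for any symmetric bilinear form $B\colon V\times V \to W$ into a Euclidean space $W$ and any subspace $V' \subset V$,
\[
\|B|_{V'}\|^2 - \frac{1}{\dim V'}\|\mathrm{tr}_{V'}B|_{V'}\|^2 \;\leq\; \|B\|^2 - \frac{1}{\dim V}\|\mathrm{tr}_V B\|^2.
\]
I would prove this by extending an orthonormal basis $e_1, \ldots, e_k$ of $V'$ to an orthonormal basis $e_1, \ldots, e_n$ of $V$; writing $B_{ij} := B(e_i,e_j) \in W$, $T_1 := \sum_{i\leq k}B_{ii}$, and $T_2 := \sum_{i>k}B_{ii}$, a short computation yields
\[
\mathrm{RHS} - \mathrm{LHS} = \sum_{(i,j)\notin[k]^2}\|B_{ij}\|^2 + \frac{1}{k}\|T_1\|^2 - \frac{1}{n}\|T_1 + T_2\|^2.
\]
Discarding all but the diagonal terms $\sum_{i>k}\|B_{ii}\|^2$ from the first sum and applying Cauchy--Schwarz in $W$ to bound this below by $(n-k)^{-1}\|T_2\|^2$, then combining the remaining expression over the common denominator $nk(n-k)$, the numerator is exactly the perfect square $\|(n-k)T_1 - kT_2\|^2_W \geq 0$.

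The main (modest) obstacle is not the inequality itself, which is a one-line Cauchy--Schwarz, but the initial identification: one must carefully check that $P_{2,n}$ really is the squared-traceless-norm of the tangential-to-$\tilde{S}'$ component of $\mathrm{II}_{\tilde{M}'_n, \tilde{N}'}$, so that the comparison with $\hat{P}_n$ becomes a comparison of the same functional on the nested pair $V' \subset V$. Once the normal bundle of $\tilde{M}'_n$ in $\tilde{N}'$ is split via the Gauss formula into the $\mathbf{n}_{1,n}$-line and its $\tilde{g}_{\tilde{N}',n}$-orthogonal complement (which is precisely the normal bundle of $\tilde{M}'_n$ inside $\tilde{S}'$), this identification is immediate.
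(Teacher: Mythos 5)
Your proposal is correct, and it reduces the lemma to the same linear--algebraic statement as the paper does (after choosing an orthonormal frame of $T\tilde{M}_n$ adapted to $T\hat{M}_n$, the quantities $P_{2,n}$ and $\hat{P}_n$ become sums over the normal directions $\mathbf{n}_{i,n}$, $i>1$, of the ``traceless norm squared'' of a symmetric matrix $A_{i,n}$ and of its principal $(n_4-n_1)\times(n_4-n_1)$ block $B_{i,n}$). Where you diverge is in how that inequality is proved. The paper uses the variational identity $\|A-\tfrac{\mathrm{tr}A}{m}I\|^2=\min_{c\in\mathbb{R}}\|A-cI\|^2$ together with the observation that, for each fixed $c$, the entries of $B_{i,n}-cI$ form a subset of those of $A_{i,n}-cI$, so the minimum over $c$ can only decrease upon passing to the block; this is a two-line argument. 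You instead expand directly, discard the off-diagonal cross terms, apply Cauchy--Schwarz to the complementary diagonal block, and exhibit the remainder as the perfect square $\|(n-k)T_1-kT_2\|^2/(nk(n-k))$; your computation is correct (and requires $0<k<n$, which holds here since $k=n_4-n_1>0$ by assumption (2) and $n-k=n_1>0$ by assumption (1)). Your route is longer but yields an explicit lower bound for the deficit $P_{2,n}-\hat{P}_n$, which the paper's argument does not. You also rightly flag, and resolve, a point the paper passes over silently: for $v_2\neq o$ the points of $\hat{M}_n$ do not lie on $\tilde{M}_n$, so $P_{2,n}$ must be understood via the $v_2$-translate of $\tilde{M}_n$, which is legitimate because only the tangent data of $\tilde{M}'_n$ at the point enters its definition.
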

\begin{proof}
Using the local orthogonal tangent frame $\{e_i,1\leq i\leq n_4\}$ for $\hat{M}'_n=\tilde{M}'_n=D^{n_1}\times (H_{\beta,o}\cdot y_n)\times (o\times D^{q_1}(1)\times o$ in $\hat{N}'=\tilde{S}'$, with respect to $\tilde{g}_{\tilde{N}',n}$,
such that when restricted to $\hat{M}_n=v_1\times (H_{\beta,o}\cdot y_n)\times v_2$, all $e_i$'s with $i\leq n_4-n_1$ provide a local tangent frame of
$\hat{M}_n$. At each point of
$\hat{M}_n$, we can use this tangent frame to present $\mathrm{II}_{\tilde{M}_n,\tilde{M}'_n,\tilde{g}_{\tilde{N}',n}}(\mathbf{n}_{i,n})$ with $i>1$ as an $n_4\times n_4$ symmetric matrix $A_{i,n}$. Then its upper left
$(n_4-n_1)\times(n_4-n_1)$-block $B_{i,n}$ is the matrix for
$\mathrm{II}_{\hat{M}_n,\hat{M}'_n,\hat{g}_{\hat{N}',n}}(\mathbf{n}_{i,n})$. So
\begin{eqnarray}
P_{2,n}&=& \sum_{i>1}||A_{i,n}-\frac{\mathrm{tr}A_{i,n}}{n_4}I||^2
=\sum_{i>1} \min_{c\in\mathbb{R}}
||A_{i,n}-cI||^2,\label{0013}\\
\hat{P}_n&=&\sum_{i>1}||B_{i,n}-\frac{\mathrm{tr}B_{i,n}}{n_4-n_1}I||^2
=\sum_{i>1}\min_{c\in\mathbb{R}}
||B_{i,n}-cI||^2,\label{0014}
\end{eqnarray}
where the norm square for a real matrix is the sum of the squares of all entries. Using the relation between $A_{i,n}$ and $B_{i,n}$ for each $i>1$ and $n$ to compare (\ref{0013}) and (\ref{0014}), we can easily get $\tilde{P}_n\leq
P_{2,n}$ at each point of $\hat{M}_n$, for any $v_1\in D^{n_1}(1)$ and $v_2\in D^{q_1}(1)$.
\end{proof}

Now we continue our estimate for (\ref{0008}). When $n$ is
sufficiently large,
\begin{eqnarray*}
& &\int_{\tilde{M}_n}(S_{\tilde{M}_n,\tilde{M}'_n,\tilde{g}_{\tilde{N}',n}}
-n_4 ||H_{\tilde{M}'_n,\tilde{M}_n,\tilde{g}_{\tilde{N}',n}}||^2)^{n_4/2}
d\mathrm{vol}_{\tilde{M}_n,\tilde{g}_{\tilde{N}',n}}
\\&\geq&
C_3\int_{v_1\in D^{n_1}(1)}\int_{v_2\in D^{q_1}(1)}
\left[\int_{\hat{M}_n} P_{2,n}^{(n_4-n_1)/2}d\mathrm{vol}_{\hat{M}_n,\hat{g}_{\hat{N}',n}}\right]
\\&\geq&
C_3\int_{v_1\in D^{n_1}(1)}\int_{v_2\in D^{q_1}(1)} W_{\hat{N},\hat{N}',\hat{g}_{\hat{N}',n}}(\hat{M}'_n),
\end{eqnarray*}
where $C_3$ is a positive universal constant.

By the inductive assumption, for each pair of $v_1\in D^{n_1}(1)$ and $v_2\in D^{q_1}(1)$, the non-negative continuous function $$f_n(v_1,v_2)=W_{\hat{N},\hat{N}',\hat{g}_{\hat{N}',n}}(\hat{M}'_n)
=W_{\hat{N},\hat{N}',\hat{g}_{\hat{N}',n}}((H_{\beta,o}\cdot y_n)\times \hat{D}_1)$$ goes to infinity with $n$. So
$$\lim_{n\rightarrow\infty}\int_{v_1\in D^{n_1}(1)}\int_{v_2\in D^{q_1}(1)}f_n(v_1,v_2)=+\infty,$$
which is enough for us to get
$$\lim_{n\rightarrow\infty}W_{N_n,N',g_{N',n}}((G\cdot x_n)\times D_1)=+\infty.$$

To summarize, when the orbit collapsing behavior is more complicated than the case that the relative complexity degree is zero, its relative Willmore functional goes to infinity even faster.
This ends the proof of Theorem \ref{collapsing-thm-2}, when the relative complexity between $N_\alpha$ and $N_\beta$ is $d$.
So by induction, the proof of Theorem \ref{collapsing-thm-2} is done.

\section{Some classical examples revisited}
\label{classical examples section}
In this section, we revisit
some important examples of Willmore submanifolds.

{\bf Example 1.}
The {\it Willmore torus} is the Willmore hypersurface $S^{n_1}(\sqrt{\frac{n_2}{n}})\times S^{n_2}(\sqrt{\frac{n_1}{n}})$ in
$S^{n+1}$, where $n_1+n_2=n$ \cite{Li2001}. It is a principle orbit
for the canonical action of
$G=\mathrm{SO}(n_1+1)\times\mathrm{SO}(n_2+1)$ on $S^{n+1}(1)$. This group action is of cohomogeneity one, and the assumptions in Theorem \ref{main-thm} are satisfied. The explicit calculation (see Example 4 below) shows it is the only principal
Willmore orbit predicted by Theorem \ref{main-thm}.

{\bf Example 2.}
The {\it Veronese surface} is a $\mathbb{R}\mathrm{P}^2$
imbedded in $S^4(1)\subset\mathbb{R}^5$, which is a Willmore submanifold parametrized by $(x,y,z)\in\mathbb{R}^3$ with $x^2+y^2+z^2=3$ \cite{CCK1970}\cite{La1969}\cite{Li2002}, i.e.
\begin{eqnarray*}
& & u_1=\frac{1}{\sqrt{3}}yz,\quad u_2=\frac{1}{\sqrt{3}}xz,
\quad u_3=\frac{1}{\sqrt{3}}xy,\\
& & u_4=\frac{1}{2\sqrt{3}}(x^2-y^2),\quad u_5=\frac{1}{6}(x^2+y^2-2z^2).
\end{eqnarray*}
Notice $(x,y,z)$ and $(-x,-y,-z)$ define the same point $(u_1,\ldots,u_5)$ on
the Veronese surface. To see it is an orbit of a group action, we need to change the standard inner product to the following one. Identify $\mathbb{R}^5$ with the space of real trace-free symmetric $3\times 3$-matrices, endowed with the inner product
$\langle A,B\rangle=\mathrm{Tr}AB$. This inner product is preserved by
all $\mathrm{SO}(3)$-conjugations. Denote $S^4(1)$ the unit sphere with respect to the new inner product. Then the conjugations of
$G=\mathrm{SO}(3)$ on $S^4(1)$ is of cohomogeneity one. Its stratification
consists of an open subset $N_1$ consisting of principal orbits, and two
closed $G$-orbits $N_2$ and $N_3$. The two singular orbits are just the
Veronese surfaces. By Corollary \ref{corollary-1}, we immediately see that they are Willmore orbits.

Theorem \ref{main-thm} predicts the existence of a principal Willmore orbit, which can be found the following observation.
For any symmetric $3\times 3$-matrix $A$, we denote its three eigenvalues as
$\lambda_1(A)\leq\lambda_2(A)\leq\lambda_3(A)$. Then all the principal orbits
$M_t$ in $N_1$
can be parametrized by the value of $\lambda_2(\cdot)$, i.e. $t=\lambda_2(\cdot)\in(-1,1)$. The antipodal map on
$S^4$ preserves $W(\cdot)$ and exchange $M_t$ with $M_{-t}$. So $t=0$ is
a critical point for $f(t)=W([M_t])$ with $t\in(-1,1)$. So by Theorem
\ref{theorem-1}, $M_0$, the hypersurface of all symmetric matrices in $S^4(1)$ with vanishing determinants, is a Willmore orbit in $S^4(1)$.

{\bf Example 3.}
Consider the $\mathrm{Ad}$-action of $G=\mathrm{SO}(5)$ on its Lie algebra $\mathfrak{g}$ preserves a canonical inner product
$\langle\cdot,\cdot\rangle_{\mathrm{bi}}$. The unit sphere $S^9(1)$ of all
vectors $v\in\mathfrak{g}$ with $\langle v,v\rangle_{\mathrm{bi}}=1$ is
a cohomogeneity one space for this $\mathrm{SO}(5)$-action. The two singular orbits, i.e.
$\widetilde{\mathrm{G}}_2(\mathbb{R}^5)=
\mathrm{SO}(5)/\mathrm{SO}(2)\mathrm{SO}(3)$ and
$\mathbb{C}\mathrm{P}^3=\mathrm{SO}(5)/\mathrm{U}(2)$, are Willmore
submanifolds by Corollary \ref{corollary-1}. Theorem \ref{main-thm} predicts
the existence of a principal Willmore orbit.

{\bf Example 4.}
Consider the
 the canonical action of
$G=\mathrm{SO}(n_1+1)\times\cdots\times\mathrm{SO}(n_p+1)$ on $S^{n+p-1}(1)$ where $n_i>0$ for each $i$, and
$\sum_{i=1}^p n_i=n$. The orbit type stratification for the $G$-action on $N$ can be described as
following.
For each non-empty subset ${A}\subset\{1,\ldots,p\}$,
$N_{A}$ is the union of all orbits
$$M_{t_1,\ldots,t_p}=S^{n_1}(t_1)\times\cdots\times S^{n_p}(t_p),$$
where $\sum_{i=1}^p t_i^2=1$, and for each $i$, $t_i\geq 0$ and the equality happens iff $i\notin A$. Then $N=\coprod_{\emptyset\neq A\subset\{1,\ldots,p\}}N_A$ is the orbit type stratification for the $G$-action on $N$.
Obviously $N_A\subset\bar{N_B}$ iff $A\subset B$. In particular,
the open submanifold $N_{\{1,\ldots,p\}}$ of $N$ consists of all principal orbits
$M_{t_1,\ldots,t_p}$ with nonzero $t_1$, $\ldots$, $t_p$.

Denote $I_i$ and $\mathbf{n}_i$ the first fundamental form and the outgoing unit normal field of $S^{n_i}(1)$ in
$\mathbb{R}^{n_i+1}$ respectively. Then restricted to $M_{t_1,\ldots,t_p}$, the outgoing unit normal field of $S^{n+p-1}(1)$ in $\mathbb{R}^{n+p}$ can be presented
as $\mathbf{n}=\sum_{i=1}^p t_i\mathbf{n}_i$.

The first fundamental form of
$M_{t_1,\ldots,t_p}$ is
$I=t_1^2 I_1+\cdots t_p^2 I_p$.
Then second fundamental form $\widetilde{\mathrm{II}}$ of $M_{t_1,\ldots,t_p}$ in
$\mathbb{R}^{n+p}$ is given by
$\widetilde{\mathrm{II}}(\mathbf{n}_i)=t_i I_i$. The mean curvature $\widetilde{H}$ of $\widetilde{\mathrm{II}}$
satisfies $\widetilde{H}(\mathbf{n}_i)=\frac{n_i}{n}t_i^{-1}$. So with respect to the imbedding $M_{t_1,\ldots,t_p}\subset S^{n+p-1}(1)$,
\begin{eqnarray*}
S&=&\sum_{i=1}^p||\widetilde{\mathrm{II}}(\mathbf{n_i})||^2-
||\widetilde{\mathrm{II}}(\mathbf{n})||^2
=\sum_{i=1}^p n_i (t_i^{-2}-1),\\
||H||^2 &=& ||\widetilde{H}||^2-\widetilde{H}^2(\mathbf{n})
=\frac{1}{n^2}[\sum_{i=1}^p n_i^2 t_i^{-2}]-1.
\end{eqnarray*}
So we have
\begin{eqnarray}\label{0001}
f(t_1,\ldots,t_p)&=& W(M_{t_1,\ldots,t_p})\nonumber\\
&=&\int_{M_{t_1,\ldots,t_p}}(S-n||H||^2)^{\frac{n}{2}}d
\mathrm{vol}_{M_{t_1,\ldots,t_p}}\nonumber\\
&=& C\cdot \prod_{i=1}^p t_i^{n_i}\cdot
\left[\sum_{i=1}^p\frac{n_i(n-n_i)}{n}t_i^{-2}\right]^{
\frac{n}{2}},
\end{eqnarray}
where $C$ is a positive constant.

Notice $f(t_1,\ldots,t_n)$ is homogeneous function of degree 0.
So $M_{t_1,\ldots,t_p}\subset S^{n+p-1}(1)$ is a Willmore orbit iff
$(t_1,\ldots,t_p)$ is a critical point for $f(t_1,\ldots,t_p)$. The only critical point of $f$
satisfying $\sum_{i=1}^p t_i^2=1$ and $t_i>0$ for each $i$ is given by $t_i=\sqrt{\frac{n-n_i}{n(p-1)}}$ for
each $i$ \cite{Li2002}. So the only principal Willmore orbit is
\begin{equation}\label{0002}
S^{n_1}(\sqrt{\frac{n-n_1}{n(p-1)}})\times
\cdots\times S^{n_p}(\sqrt{\frac{n-n_p}{n(p-1)}}).
\end{equation}

Then we look for Willmore orbits in other $N_A$. The most obvious ones are
$N_A$ where $A$ is a one-element set, which must be Willmore orbits by Corollary \ref{corollary-1}. In general
the closure of any
$N_A$ is a unit sphere. So the above argument also proves there
exists a unique Willmore orbit in $N_A$. For example, the only Willmore orbit
in $N_{\{1,\ldots,k\}}$ is
\begin{eqnarray*}
& &S^{n_1}(\sqrt{\frac{n'-n_1}{n'(k-1)}})\times
\cdots\times S^{n_k}(\sqrt{\frac{n'-n_k}{n'(k-1)}})\times0\times\cdots\times0,
\end{eqnarray*}
where $n'=\sum_{i=1}^k n_i$.

To summarize, Theorem \ref{main-thm} provides sharp estimates for the number
of Willmore orbits on the unit spheres in Example 1 and Example 4, which can be shown by short and
explicit calculation. For the other two examples, we can apply Theorem 5.1 of
\cite{Li2001} to show there are exactly three Willmore orbits, i.e. the estimates are still sharp, which can not be seen from Theorem \ref{main-thm}. On the other hand, these estimates for the number of Willmore orbits are stable when the unit sphere metrics are equivalently perturbed.

Ming Xu\\
School of Mathematical Sciences,
Capital Normal University,
Beijing 100048, P. R. China,
Email:mgmgmgxu@163.com\bigskip
\\
Jifu Li\\
School of Science,
Tianjin University of Technology,
Tianjin 300384, P. R. CHINA,
Email:ljfanhan@126.com
\end{document}